\documentclass{amsart}   

\usepackage[T1]{fontenc}
\usepackage[utf8]{inputenc}
\usepackage[english]{babel}
\usepackage{amsmath}
\usepackage{amssymb}
\usepackage{amsfonts}
\usepackage{amsxtra}
\usepackage{enumerate}
\usepackage{verbatim}
\usepackage{color}
\pagestyle{headings}
\usepackage[mathscr]{eucal}
\usepackage[pdftex,colorlinks,urlcolor=blue,pdfstartview=FitH,
]{hyperref}

\usepackage{setspace}

\newcommand{\N}{\mathbb N}
\newcommand{\Z}{\mathbb Z}

\newcommand{\R}{\mathbb R}

\newtheorem{thm}{Theorem}[section]
\newtheorem{definition}[thm]{Definition}
\newtheorem{lemma}[thm]{Lemma}
\newtheorem{remark}[thm]{Remark}

\newtheorem{proposition}[thm]{Proposition}
\newtheorem{question}[thm]{Question}
\newtheorem{conjecture}[thm]{Conjecture}
\newtheorem*{remark*}{Remark}

\newtheorem*{fact}{Fact}

\let\phi=\varphi
\let\e=\varepsilon

\numberwithin{equation}{section}
\usepackage{graphicx}
\begin{document}

\title{A Riemannian plane with only two injective Geodesics} 

\author{Victor Bangert}
\address{Mathematisches Institut, Abteilung f\"ur Reine Mathematik, Albert-Ludwigs-Universit\"at Freiburg, Ernst-Zermelo-Stra\ss e 1, 79104 Freiburg, Germany}
\email{Victor.Bangert@math.uni-freiburg.de}
\thanks{}

\author{Stefan Suhr}
\address{Fakult\"at f\"ur Mathematik, Ruhr-Universit\"at Bochum, Universit\"atsstra\ss e 150, 44780 Bochum, Germany}
\email{Stefan.Suhr@ruhr-uni-bochum.de}
\thanks{Stefan Suhr is partially supported by the SFB/TRR 191 ``Symplectic Structures in Geometry, Algebra and Dynamics'', funded by the Deutsche Forschungsgemeinschaft.}

\keywords{}

\begin{abstract}
We present an example of a complete Riemannian plane with precisely two injective geodesics -- up to reparameterization. The example arises as a perturbation of a surface of revolution 
with contracting end. The last section is devoted to open problems.
\end{abstract}

\maketitle

\onehalfspacing

\section{Introduction}\label{sec1}

The geodesics of a complete, noncompact Riemannian manifold $M$ can be divided into the following three classes:

\begin{definition}
A geodesic $c\colon \R\to M$ is either 
\begin{itemize}
\item[(i)] proper (as a map from $\R$ to $M$), or
\item[(ii)] bounded, i.e $c(\R)$ is a bounded subset of $M$, or
\item[(iii)] oscillating, i.e. neither proper nor bounded.
\end{itemize}
\end{definition}
Proper geodesics are alternatively called ``divergent'' or ``escaping'' in the literature. 

Since the end of the $19^\text{th}$ century questions concerning the existence and quantity of geodesics in one of these classes have found continuous interest in differential geometry
and in the theory of dynamical systems, see the contributions by H.v. Mangoldt \cite{Mangoldt81} and by J. Hadamard \cite{Hadamard1898} as early examples for this interest.

Another important property that a geodesic $c\colon \R\to M$ may or may not have is injectivity. This is of particular interest if $\dim M=2$. Note that we require injectivity of $c$ as a 
map defined on $\R$, in particular periodic geodesics will not be injective. A geodesic $c\colon \R\to M$ is proper and injective if and only if $c$ is an embedding. Adopting the 
terminology introduced in \cite{Cardelell19} we define:

\begin{definition}
An embedded geodesic line in $M$ is the image $c(\R)$ of an injective, proper geodesic $c\colon \R\to M$. 
\end{definition}

\noindent Note that embedded geodesic lines need not be length-minimizing. Conversely, if a geodesic $c\colon \R\to M$ is length-minimizing between any two of its points, then $c(\R)$ is an 
embedded geodesic line. Such lines will be called {\it straight lines}. 

Our main result sheds light on the following question: 

\begin{question}
What is the maximal number $n\in \N\cup \{\infty\}$ such that every complete Riemannian plane has at least $n$ embedded geodesic lines?
\end{question}

\noindent It is proved in \cite{BanCom81} that $n\ge 1$, i.e. every complete Riemannian plane contains an embedded geodesic line. Here we show:

\begin{thm}\label{T1}
There exists a complete Riemannian plane with precisely two embedded geodesic lines.
\end{thm}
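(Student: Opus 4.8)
The plan is to start from a rotationally symmetric surface of revolution whose end contracts — concretely, a metric $dr^2 + f(r)^2\,d\theta^2$ on $\R^2$ with $f(0)=0$, $f'(0)=1$, $f>0$, and $f(r)\to 0$ (say, with $f$ eventually strictly decreasing and $\int^\infty f < \infty$, so the end is a genuinely contracting cusp-like end). On such a surface Clairaut's relation governs the geodesics: each geodesic has a conserved quantity $f(r)\sin\alpha$ (where $\alpha$ is the angle with the meridians), and geodesics with nonzero Clairaut constant are trapped in a region $f(r)\ge c$, hence cannot be proper; only the meridians ($c=0$) escape to the contracting end. So on the unperturbed surface \emph{every} meridian is an embedded geodesic line, giving a one-parameter family. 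The idea is to perturb the metric near the axis $r=0$ to destroy all but one of these meridians while (a) keeping completeness, (b) keeping the contracting-end behaviour so that no non-meridional geodesic becomes proper, and (c) not accidentally creating a new embedded geodesic line somewhere. The perturbation should be supported in a compact annular/disk region, be rotationally non-symmetric, and break the $S^1$-symmetry down to a single reflection, so that exactly one meridian (the one fixed by the surviving reflection, together with its two halves forming one full line) survives as a geodesic — yielding precisely two embedded geodesic lines after one checks that the ``second'' is forced by the reflection symmetry as well, or more likely: arrange a $\Z/2$ (point) symmetry so that exactly two meridional lines through the center are geodesic and show these are the only embedded geodesic lines.

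First I would set up the surface-of-revolution background carefully: choose $f$ explicitly (e.g. $f(r)=\sin r$ truncated, or $f$ equal to a Gaussian-type bump that decays), verify completeness, and record the Clairaut first integral together with the dichotomy ``Clairaut constant $\neq 0\Rightarrow$ bounded or oscillating, never proper.'' Second, I would introduce a conformal or direct perturbation $g_\e = g_0 + \e\,h$ with $h$ compactly supported away from a neighbourhood of the contracting end, chosen so that $g_\e$ retains a nontrivial isometry group (an isometric involution, either a reflection or the antipodal-type map) but no continuous symmetry; the surviving meridian(s) are exactly the fixed-point sets of these involutions, which remain geodesics. Third — and this is where the real work lies — I would prove that for small $\e$ the only proper geodesics of $g_\e$ are the one (or two) surviving meridians: outside the support of $h$ the geometry is unchanged, so a proper geodesic must eventually enter the unperturbed contracting end and there Clairaut forces its Clairaut-type invariant to be (asymptotically) zero, i.e. it must be asymptotic to a meridian; then a shooting/continuity argument, using that generic directions at the perturbed region are deflected off the meridians, shows only the symmetric ones can be genuinely meridional along their whole length. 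Finally I would rule out injectivity failing or extra lines appearing: check that the surviving symmetric geodesic is actually injective and proper (no self-intersections, escapes the end), and that the result of \cite{BanCom81} forces at least one, so the count is exactly right.

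The main obstacle I expect is step three: controlling \emph{all} proper geodesics of the perturbed metric, not just those near the meridians. The danger is that the perturbation, while killing the obvious meridians, could create a new embedded geodesic line by bending some geodesic through the perturbed region in just the right way to send it out the contracting end along a meridian at the far side. To handle this I would want a clean ``capture'' lemma: any geodesic that enters the contracting end with Clairaut constant bounded away from $0$ turns around and comes back (so it is not proper), hence a proper geodesic enters the end with Clairaut constant $0$ and stays a meridian forever in the unperturbed region — which pins down, on the unperturbed side, exactly which meridian it is; then one only has to match finitely many ``meridian candidates'' through the perturbed core and show the symmetry constraint selects precisely the claimed ones. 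Making the perturbation explicit enough that this matching can be verified by hand (e.g. a rotationally symmetric metric near the axis perturbed by a single well-chosen bump with a discrete symmetry) is the technical heart of the construction.
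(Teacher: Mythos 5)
Your overall strategy coincides with the paper's: start from a rotationally symmetric plane with a contracting end, use the Clairaut integral to force each end of a proper geodesic to be exactly radial once it leaves the perturbed region, and then perturb the metric on a compact set so that only two radial lines can be completed to global embedded geodesic lines. The existence half of your plan is also sound: the fixed-point set of an isometric reflection is totally geodesic, so a perturbation with the right discrete symmetry keeps two meridian lines geodesic (the paper achieves the same end by prescribing the zero set of a deflection function rather than by symmetry). The genuine gap sits exactly where you place ``the technical heart'': you supply no mechanism for deciding, for an arbitrary radial ray entering the perturbed region, what comes out on the other side, and no mechanism preventing an injective geodesic from making \emph{bounded} non-radial excursions back into the outer region between its two radial ends. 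Your capture lemma only excludes non-radial rays that escape to infinity; it does not exclude a geodesic that leaves the perturbed core non-radially, climbs to some finite radius, turns around (as Clairaut forces it to), and re-enters --- a priori such a geodesic could still be proper and injective, and ``matching finitely many meridian candidates'' understates the problem, since the candidates form a continuum parameterized by the incoming angle.

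The paper closes both holes with two specific devices absent from your plan. First, the perturbation is not a generic symmetric bump but the scattering construction of \cite{GlSi78}: the metric is built so that the geodesics orthogonal to $\{R_1\}\times S^1$ foliate the annulus $[R_0,R_1]\times S^1$ and arrive at $(R_0,\theta)$ with a prescribed deflection angle $\phi_\e(\theta)$, where $\phi_\e^{-1}(0)=\{\tfrac{\pi}{2}\}\cup[\pi,2\pi]$ and the support lies in the half-annulus $0<\theta<\pi$; hence every radial line except the two surviving ones crosses the perturbation on exactly one of its two rays and is detectably deflected there. Second, the disk $\{r\le R_0\}$ is kept an exact round hemisphere. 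This yields (a) the key injectivity lemma that an injective geodesic meets $\{r\le R_0\}$ in at most one parameter interval, of length $\pi$ (any two half great circles of a hemisphere intersect), which, combined with a compactness argument, shows an injective geodesic is radial at \emph{every} point with $r\ge R_1$ --- this is what kills the bounded excursions; and (b) the explicit antipodal matching $(R_0,\theta)\mapsto(R_0,\theta+\pi)$ with a computable change of direction, which links the two radial ends and reduces uniqueness to a short case analysis on the incoming angle. Note finally that you propose perturbing near $r=0$; that would destroy precisely the hemisphere structure the argument needs, so the perturbation must live in an annulus away from the origin.
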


\noindent So Theorem \ref{T1} implies $n\le 2$. We have the following conjecture, cf. also the end of \cite{Ban85}:

\begin{conjecture}\label{C1.5}
Every complete Riemannian plane has at least two embedded geodesic lines.
\end{conjecture}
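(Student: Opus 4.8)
The plan is to bootstrap from the single embedded geodesic line guaranteed by \cite{BanCom81}. Call it $L_0=c_0(\R)$, where $c_0\colon\R\to M$ is proper and injective but \emph{not} assumed length-minimizing. Since $c_0$ is a proper embedding of $\R$ into $M\cong\R^2$, its image is a properly embedded closed $1$-submanifold that separates $M$ into two open regions $M^+$ and $M^-$, each homeomorphic to a half-plane and each having $L_0$ as boundary. Although $M$ has only one topological end, the organizing idea is that $L_0$ splits that end in two, so that the construction producing one line can be localized to a single half-plane and repeated; the target is a second embedded geodesic line $L_1$ with $L_1\neq L_0$.

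First I would treat the minimizing regime, where straight lines are available. Fix a compact set $K\subset M^+$ disjoint from $L_0$, choose sequences $q_n^\pm\to\infty$, and let $\sigma_n$ be a minimizing segment from $q_n^-$ to $q_n^+$ whose midpoint lies in $K$; if $d(q_n^-,q_n^+)\to\infty$ and a subsequence converges to a proper geodesic, the limit is a straight line meeting $K$, hence distinct from $L_0$. To see why the count should be \emph{two} I would fix $p\in L_0$ and study $A\subset S_pM$, the closed set of initial directions of rays from $p$; it contains $\pm\dot c_0(0)$ (when $c_0$ is minimizing), and a straight line through $p$ corresponds to $v,w\in A$ whose rays concatenate minimizingly, $d(\gamma_v(s),\gamma_w(t))=s+t$ for all $s,t\ge 0$. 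I would then run an intermediate-value/degree argument on $S_pM$: as $v$ sweeps the part of $A$ on one side of $L_0$, its minimizing opposite should sweep the complementary side, and a parity count of aligned antipodal pairs, anchored by the pair producing $L_0$, should force a further pair. When $M$ has finite total curvature this can plausibly be made rigorous, since Cohn--Vossen bounds the angle at infinity $2\pi-\int_M K\,dA$ and yields a well-defined ideal boundary on which the space of straight lines is compact.

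The general case, where $L_0$ (and possibly every line) is non-minimizing, is where the real work lies, because embedded geodesic lines that are not length-minimizing are not characterized by any variational minimum and so cannot simply be extracted as limits of shortest segments. Here I would seek a \emph{relative} version of the argument behind \cite{BanCom81} adapted to the half-plane $M^+$ with its geodesic boundary $L_0$: fix a ray $\gamma$ pointing into the end of $M^+$, form the Busemann function $b_\gamma$, and analyze its co-rays and the structure of the gradient flow of $b_\gamma$ inside $M^+$. A line in $M^+$ should appear where an integral curve is simultaneously asymptotic to $\gamma$ and to an opposite ray, i.e. a bi-asymptotic, properly embedded geodesic that stays in $M^+$ and therefore differs from $L_0$. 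The step I expect to require genuinely new input is showing that such a doubly-asymptotic curve exists and is injective and proper, rather than collapsing onto $\partial M^+=L_0$ or closing up into a bounded orbit.

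The hard part, and the reason the statement is only conjectured, is the loss of compactness when the geometry at infinity degenerates: precisely the contracting-end behaviour exploited in Theorem \ref{T1}, where the angle at infinity collapses and the ideal boundary gives no room for the parity argument. There one must rule out that every candidate second line escapes into the single end without leaving any trace in a fixed compact set, is pushed back onto $L_0$, or degenerates into a merely bounded or oscillating geodesic. Securing this demands a quantitative lower bound on how severely a minimizing or bi-asymptotic arc crossing $M^+$ can be pinched by concentrated negative curvature, together with a separation argument certifying $L_1\neq L_0$. Obtaining that uniform control, in the absence of a well-behaved circle at infinity, is the crux on which a proof of Conjecture \ref{C1.5} would stand or fall.
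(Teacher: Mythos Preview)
The statement you are addressing is labeled \emph{Conjecture}~\ref{C1.5} in the paper, and the paper does not prove it: it is explicitly left open, with the remark that if the conjecture holds then Theorem~\ref{T1} would pin down the maximal number $n$ to exactly $2$. There is therefore no proof in the paper to compare your proposal against.

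Your write-up is honest about this---you yourself call the compactness failure ``the reason the statement is only conjectured''---so what you have produced is a strategy outline rather than a proof. As such it is a reasonable survey of the available tools (the line from \cite{BanCom81}, Busemann functions and co-rays, Cohn--Vossen's total curvature bound, min-max in a half-plane), and you correctly identify the regime where these tools break down: contracting ends with $\int_M K\,dA = 2\pi$, exactly the regime of the metrics $g_\e$ constructed in this paper. But the proposal does not close the gap. In particular, the intermediate-value/parity argument on $S_pM$ is not made precise even in the finite-total-curvature case, and the ``relative'' version of \cite{BanCom81} in the half-plane $M^+$ with geodesic boundary is asserted rather than carried out; the difficulty that a candidate second line could collapse onto $L_0$ is named but not overcome. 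These are the genuine obstacles, and until one of them is resolved the statement remains a conjecture.
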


\noindent If this conjecture is true we conclude from Theorem \ref{T1} that $n=2$. 

Actually we prove a little more than is stated in Theorem \ref{T1}, namely the existence of a complete Riemannian plane with only two injective geodesics - up to reparameterization. 
This example is constructed by a compactly supported perturbation of the rotationally invariant metric $g$ on $\R^2$ described below. 

\noindent Throughout the paper we will parameterize $\R^2\setminus \{(0,0)\}$ by standard polar coordinates 
\[
(r,\theta)\colon \R^2\setminus \{(0,0)\}\to (0,\infty)\times S^1,
\]
where $S^1=\R/2\pi \Z$.\footnote{We follow the time-honored abuse of notation and denote coordinates and coordinate functions by the same symbols.}
The coordinate function $r$ extends to $(0,0)$ by $r(0,0)=0$. In these coordinates the unperturbed metric $g$ is given by
\[
g=dr^2+f^2\circ r d\theta^2,
\]
where $f\in C^\infty(\R_{>0},\R_{>0})$ satisfies 
\begin{equation}\label{E1.1}
f(r)=\sin r\text{, if } r\in \left(0,\frac{3\pi}{4}\right)\text{,}
\end{equation}
and
\begin{equation}\label{E1.2}
f'(r)<0 \text{, if } r>\frac{\pi}{2}\text{, and }\lim_{r\to \infty} f(r)=0.
\end{equation}
Note that on the set $\left(0,\frac{3\pi}{4}\right)\times S^1$ the metric $g$ coincides with the metric of the round sphere of radius one in geodesic polar coordinates. 
In particular, $g$ extends to a smooth metric on $\R^2$, which is obviously complete. To avoid confusion with angles on $S^1$, we set $R_0:=\frac{\pi}{2}$ and 
$R_1:= \frac{3\pi}{4}$. In Section \ref{sec2} and \ref{sec3} we will prove

\begin{proposition}\label{P1.6}
There exists $\e_0>0$ and a $C^\infty$-family of Riemannian metrics $g_\e$, $\e\in (-\e_0,\e_0)$, such that the following is true:
\begin{itemize}
\item[(i)] $g_0=g$ and $g_\e$ and $g$ coincide outside the set 
\[
\{(r,\theta)|\; R_0<r<R_1,\; 0<\theta<\pi\}.
\]
\item[(ii)] If $0<|\e|<\e_0$ then $(\R^2,g_\e)$ has precisely two injective geodesics - up to reparameterization.
\end{itemize}
\end{proposition}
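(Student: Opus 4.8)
The plan is to add to $g$ an explicit compactly supported ``twist'' in $U$ that destroys every meridian but two, and then to classify the geodesics of $g_\e$ by combining Clairaut's relation on the unperturbed part with two elementary facts about the surface of revolution.

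\emph{Geodesics of $g$ and two facts.} By Clairaut's relation $f(r)\sin\phi\equiv\beta$, the injective geodesics of $(\R^2,g)$ are exactly the meridians (which have $\beta=0$); a geodesic with $\beta\neq0$ stays in the compact annulus $\{r_-(\beta)\le r\le r_+(\beta)\}$, where $f(r_\pm(\beta))=|\beta|$ and $r_-(\beta)<R_0<r_+(\beta)$, and is closed or equidistributed there, hence not injective. Two quantitative points will be used: (a) the $\theta$-increment $\Delta(\beta)$ of a $\beta$-geodesic over one oscillation of $r$ satisfies $\Delta(\beta)\ge 2|\beta|\int_{R_1}^{r_+(\beta)}f^{-2}$, which tends to $\infty$ as $\beta\to0$ since $f\to0$ forces $r_+(\beta)\to\infty$; moreover a geodesic arc whose winding over one oscillation is $\ge2\pi$ has a transverse self-intersection (such an arc is $r$-symmetric about its apex with $\theta$ monotone, so once the total turn reaches $2\pi$ the ascending and descending branches must cross). (b) On the round hemisphere $\{r\le R_0\}$, a maximal geodesic arc with interior in $\{r<R_0\}$ and endpoints on $\{r=R_0\}$ joins two \emph{antipodal} equator points, hence turns $\theta$ by exactly $\pi$; and two such arcs with no common endpoint meet inside $\{r<R_0\}$, since their great circles meet in an antipodal pair one point of which lies in $\{r<R_0\}$ and thus on both arcs.

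\emph{The perturbation and the two lines.} Take $g_\e=g+\e\,h$ with $h=\chi(r)\,\psi(\theta)\,dr\otimes dr$, where $\chi\ge0$ is smooth, $\int\chi>0$, $\mathrm{supp}\,\chi\subseteq[R_0+\delta,R_1-\delta]$, and $\psi\ge0$ is smooth on $S^1$, zero on $(\pi,2\pi)$, positive on $(0,\pi)$, vanishing to infinite order at $\{0,\pi\}$, invariant under $\sigma(r,\theta)=(r,\pi-\theta)$, and with its only critical point on $(0,\pi)$ a maximum at $\pi/2$ (so $\psi'(\theta)\neq0$ on $(0,\pi)\setminus\{\pi/2\}$). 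For $|\e|<\e_0$ this is a metric, $g_0=g$, it agrees with $g$ off $\overline U$, and it is $\sigma$-invariant: this is (i). Along a $g_\e$-geodesic the Clairaut quantity $p_\theta=f^2\dot\theta$ obeys $\dot p_\theta=\tfrac12\,\e\,\chi(r)\psi'(\theta)\dot r^2$; hence a geodesic that enters the ``bump'' $\mathrm{supp}\,\chi\times\{0<\theta<\pi\}$ radially at an angle $\theta_\ast\neq\pi/2$ leaves it with Clairaut constant $\tfrac12\,\e\,\psi'(\theta_\ast)\int\chi\,(1+O(\e))\neq0$, uniformly in $\theta_\ast$, while a radial geodesic at $\theta_\ast=\pi/2$ is unaffected. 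Since $\mathrm{Fix}(\sigma)$ is the $y$-axis and the $x$-axis lies off $\mathrm{supp}\,h$, both are $g_\e$-geodesics; being properly embedded lines they are injective --- these are the two geodesics of the statement.

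\emph{There are no others.} Let $c$ be an injective $g_\e$-geodesic. If $c$ is proper, then $r\circ c\to\infty$ at both ends, so (as $\{r\ge R_1\}$ is unperturbed with $f\to0$) Clairaut makes $c$ a radial ray near each end, escaping in directions $\theta_\pm$. If $\theta_+\in\{0,\pi/2,\pi,3\pi/2\}$ then $c$ shares a ray with the $x$- or $y$-axis and hence equals it. Otherwise, tracing $c$ inward from an escaping end (through the pole first when $\theta_+\in(\pi,2\pi)$) it enters the bump at an angle in $(0,\pi)\setminus\{\pi/2\}$, picks up a small $\beta\neq0$, crosses the hemisphere (turning $\theta$ by $\pi$ by (b)), and then makes an excursion into $\{r\ge R_1-\delta\}$ carrying this $\beta$; by (a) the excursion winds by $\ge2\pi$ once $\e_0$ is small, so it self-intersects --- impossible. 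If $c$ is not proper, then (by the same ``trace inward'' argument) it escapes at neither end, so each entry into $\{r>R_1\}$ is a genuine excursion with nonzero Clairaut constant, of absolute value $\ge\beta_\ast$ where $\Delta(\beta)\ge2\pi$ for $|\beta|<\beta_\ast$ (otherwise $c$ self-intersects); hence $c(\R)\subseteq\{r\le r_+(\beta_\ast)\}$, so $c$ is bounded. It is not the equator, and not contained in $\{r\le R_0\}$ or in $\{r\ge R_0\}$ (the only complete geodesic in either is the equator), so it crosses $\{r=R_0\}$ --- in fact infinitely often --- and thus enters the open hemisphere infinitely often, giving infinitely many pairwise disjoint arcs as in (b); but any two such disjoint arcs intersect, a contradiction. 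Therefore $c$ is the $x$- or the $y$-axis; in particular every injective geodesic is proper, and there are exactly two. The technical heart is the deflection estimate: because $\psi$ must vanish to infinite order on $\partial U$ (else meridians near $\theta=0$ survive as injective geodesics), the twist is exponentially weak near $\{0,\pi\}$, and one must verify that the induced Clairaut constant is still nonzero for \emph{every} $\theta_\ast\neq\pi/2$ with an error scaling like $\psi'(\theta_\ast)$, so that a single $\e_0$ serves all $\e\in(-\e_0,\e_0)\setminus\{0\}$; establishing this, and following $c$ cleanly through the bump--hemisphere--excursion, is where the work lies.
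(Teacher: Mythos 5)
Your construction is genuinely different from the paper's: instead of prescribing a scattering map by building a foliation of the annulus by geodesic arcs (the Glimm--Simon method used here), you perturb the tensor directly by $\e\,\chi(r)\psi(\theta)\,dr\otimes dr$ and track the Clairaut quantity $p_\theta=f^2\dot\theta$ through the bump; your fact (b) about half great circles is exactly the paper's Lemma \ref{L3.4}, and the overall skeleton (injective $\Rightarrow$ radial ends $\Rightarrow$ the entry angle must lie in $\{0,\frac{\pi}{2},\pi,\frac{3\pi}{2}\}$) matches. However, there is a genuine gap where you close the proper case. Your estimate (a), namely $\Delta(\beta)\ge 2|\beta|\int_{R_1}^{r_+(\beta)}f^{-2}\to\infty$ as $\beta\to 0$, does not follow from \eqref{E1.1} and \eqref{E1.2}. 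Writing the turning of the excursion above $R_1$ as $\int \frac{dv}{|f'(f^{-1}(\beta/v))|\sqrt{1-v^2}}$ (substituting $v=\beta/f(r)$), divergence requires quantitative lower control of $|f'|$ near $r_+(\beta)$ that the hypotheses $f'<0$, $f\to 0$ do not provide: if $f$ decays through long near-constant stretches separated by sharp drops, one can produce sequences $\beta_n\to 0$ along which the excursion above $R_1$ turns by an arbitrarily small total angle. Such an excursion is an embedded hairpin, so the self-intersection you rely on simply does not occur, and the contradiction in the proper case collapses for admissible $f$.

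The repair is to discard the winding estimate altogether: once the geodesic leaves the bump with $\beta\neq 0$, Clairaut traps it below $r_+(\beta)<\infty$, so it must descend again, recross the bump and enter the open hemisphere a second time, and then your own fact (b) (two disjoint half great circles in the open hemisphere meet) already yields the contradiction. This is precisely how the paper argues, via Lemmas \ref{L3.3}--\ref{L3.5}, with no curvature-at-infinity input. Two further steps of your proposal are asserted rather than proved. First, that the only complete $g_\e$-geodesic contained in $\{r\ge R_0\}$ is the equator: this is clear for $g$ but not for $g_\e$, since a geodesic could a priori turn around inside the bump ($\dot r=0$ is possible there once $|p_\theta|$ reaches $f(r)$); the paper disposes of this by the compactness arguments of Lemmas \ref{L3.3} and \ref{L3.6}, and you offer no substitute. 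Second, the uniform nonvanishing of the exit Clairaut constant for every $\theta_*\in(0,\pi)\setminus\{\frac{\pi}{2}\}$, which you correctly flag as the technical heart, is left open; near $0$ and $\pi$ a sign argument (all contributions to $\dot p_\theta$ have one sign while $\theta$ stays on one side of $\frac{\pi}{2}$) saves you, but near $\frac{\pi}{2}$ you need monotonicity of the exit constant in $\theta_*$, e.g.\ from a nondegenerate maximum of $\psi$. The paper sidesteps this entirely by prescribing the deflection function exactly and reconstructing the metric from it.
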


The proof of Proposition \ref{P1.6} depends primarily on the scattering method from \cite{GlSi78}. The metric $g_\e$ will scatter the $g$-geodesics hitting the circle $\{R_1\}\times S^1$
orthogonally at the point $(R_1,\theta)$ so as to intersect the circle $\{R_0\}\times S^1$ at the point $(R_0,\theta)$ under the angle $\frac{\pi}{2}+\phi_\e(\theta)$ determined by a 
deflection function $\phi_\e\colon S^1\to \left(-\frac{\pi}{4},\frac{\pi}{4}\right)$, see Section \ref{sec2} for details. 

Finally we note that the special properties of the metric $g$, in particular condition \eqref{E1.1}, are requested in order to simplify the proof of Proposition \ref{P1.6}. In principle, one 
could prove this perturbation result for a much larger class of rotationally symmetric metrics. But since the only purpose of Proposition \ref{P1.6} is to prove Theorem \ref{T1} we tried to 
minimize the necessary arguments. 

Throughout the paper geodesics will be parameterized by arclength.

\section{The perturbed metrics $g_\e$}\label{sec2}

The construction of the metrics $g_\e$ follows the method used to prove the more general Theorem 1 in \cite{GlSi78}. We choose a function $\phi \in C^\infty(S^1,\R)$ 
satisfying the following conditions:
\begin{equation}\label{E2.1}
\phi\left(\frac{\pi}{2}+\theta\right)=-\phi\left(\frac{\pi}{2}-\theta\right)\text{, for all }\theta\in \left(0,\frac{\pi}{2}\right)
\end{equation}
\begin{equation}\label{E2.2}
\phi^{-1}(0)=\left\{\frac{\pi}{2}\right\}\cup [\pi,2\pi]
\end{equation}
The functions $\phi_\e =\e\phi$ will play the role of the deflection functions mentioned at the end of Section \ref{sec1}. Note that \eqref{E2.2} implies:
\begin{equation}\label{E2.3}
\text{If }\e\neq 0\text{ and }\theta\in \left(0,\frac{\pi}{2}\right)\cup  \left(\frac{\pi}{2},\pi\right)\text{ then }\phi_\e(\theta)\neq 0.
\end{equation}
As a consequence of \eqref{E2.1} and \eqref{E2.2} we have 
\[
\int_0^{2\pi}\sin\phi_\e(\theta)d\theta=0. 
\]
Hence we can define a function $l_\e\in C^\infty(S^1,\R)$ by 
\begin{equation}\label{E2.4}
l_\e(\theta):=\frac{\pi}{4}-\int_0^\theta \sin \phi_\e(\theta) d\theta. 
\end{equation}
We will always assume that $|\e|$ is so small that $|\phi_\e|<\frac{\pi}{4}$ and $l_\e>0$. Note that 
\begin{equation}\label{E2.5}
l_\e(\theta)=\frac{\pi}{4}\text{ for }\theta\in [\pi,2\pi]\text{, and }l_\e(\theta)\neq \frac{\pi}{4}\text{ if }\e\neq 0\text{ and }\theta\in (0,\pi).
\end{equation}
Recall that we have set $R_0=\frac{\pi}{2}$, $R_1= \frac{3\pi}{4}$. In particular we have $R_1-R_0=\frac{\pi}{4}$.

\begin{remark}
Suppose $g_\e$ is a metric such that the $g_\e$-geodesics intersecting $\{R_1\}\times S^1$ orthogonally, foliate $[R_0,R_1]\times S^1$ and such that the 
$g_\e$-geodesic starting at $(R_1,\theta)$ orthogonally to $\{R_1\}\times S^1$ hits $\{R_0\}\times S^1$ at $(R_0,\theta)$ with angle $\frac{\pi}{2}+\phi_\e(\theta)$. 
Let $L_\e(\theta)$ denote the $g_\e$-length of this geodesic connecting $(R_0,\theta)$ to $(R_1,\theta)$. Then the first variation formula implies that $L_\e'(\theta)=-\sin\phi_\e(\theta)$,
i.e. there exists $c\in \R$ such that $L_\e=l_\e+c$. In particular, we see that the condition 
\[
\int_0^{2\pi} \sin\phi_\e(\theta)d\theta=0
\]
is necessary for the existence of $g_\e$. 
\end{remark}

\begin{proposition}\label{P2.2}
There exists $\e_0>0$ and for every $\e\in (-\e_0,\e_0)$ a Riemannian metric $g_\e$ on $\R^2$ and a diffeomorphism 
\[
\Phi_\e\colon \{(s,\theta)|\; \theta\in S^1, 0\le s \le l_\e(\theta)\}\to [R_0,R_1]\times S^1
\]
satisfying the following properties:
\begin{itemize}
\item[(i)] $g_0=g$ and $\Phi_0(s,\theta)=(R_0+s,\theta)$ for all $(s,\theta)\in \left[0,\frac{\pi}{4}\right]\times S^1$.
\item[(ii)] $g_\e$ and $\Phi_\e$ are smooth as functions of all three variables.
\item[(iii)] $g_\e$ and $g$ coincide outside the set $(R_0+\e_0,R_1-\e_0)\times (0,\pi)$. 
\item[(iv)] For every $\theta\in S^1$ the curve 
\[
c_{\e,\theta}\colon [0,l_\e(\theta)]\to [R_0,R_1]\times S^1,\quad c_{\e,\theta}(s)=\Phi_\e(s,\theta),
\]
is a $g_\e$-geodesic satisfying 
\[
\dot{c}_{\e,\theta}(0)=\cos\phi_\e(\theta)\partial_r|_{(R_0,\theta)}+\sin\phi_\e(\theta) \partial_\theta|_{(R_0,\theta)},
\]
and $\dot{c}_{\e,\theta}(l_\e(\theta))=\partial_r|_{(R_1,\theta)}$, and $(r\circ c_{\e,\theta})^{\boldsymbol{\cdot}}>0$.
\item[(v)] For $\theta=\frac{\pi}{2}$ we have $c_{\e,\frac{\pi}{2}}([0,l_\e(\frac{\pi}{2})]=[R_0,R_1]\times \{\frac{\pi}{2}\}$.
\end{itemize}
\end{proposition}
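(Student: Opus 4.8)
The plan is to construct $g_\e$ explicitly on the strip $[R_0,R_1]\times S^1$ as a warped-product-type metric in coordinates $(s,\theta)$ adapted to the desired geodesic foliation, then verify that it glues smoothly to $g$ outside a compact set. Concretely, I would look for $g_\e$ in the form $\Phi_\e^*g_\e = ds^2 + E_\e(s,\theta)^2\,d\theta^2$ on the region $\{0\le s\le l_\e(\theta)\}$, so that by construction the curves $s\mapsto (s,\theta)$ are unit-speed geodesics (the $s$-lines of a metric in this normal form are always geodesics). This automatically gives item (iv)'s geodesic property and $(r\circ c_{\e,\theta})^{\boldsymbol\cdot}>0$; the remaining content of (iv) is to choose $\Phi_\e$ so that the endpoint conditions on $\dot c_{\e,\theta}(0)$ and $\dot c_{\e,\theta}(l_\e(\theta))$ hold, and (v) is the statement that for $\theta=\pi/2$, where $\phi_\e=0$ and $l_\e=\pi/4$, nothing is perturbed.

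The key steps, in order: (1) On the ``outer'' boundary $s=l_\e(\theta)$, prescribe $\Phi_\e(l_\e(\theta),\theta)=(R_1,\theta)$ and arrange that the $s$-coordinate lines meet $\{R_1\}\times S^1$ orthogonally — this forces $\partial_\theta\Phi_\e$ to be tangent to the circle there, i.e. $\Phi_\e$ maps a neighborhood of that boundary to a neighborhood of $\{R_1\}\times S^1$ compatibly with $g$, so set $E_\e(l_\e(\theta),\theta)=f(R_1)$ and match the normal derivative using the geodesic equation for $g$ near $r=R_1$. (2) Integrate the geodesic flow of the target metric $g$ backwards from $\{R_1\}\times S^1$: for each $\theta$, the $g$-geodesic leaving $(R_1,\theta)$ orthogonally and moving into $r<R_1$ is just the meridian $\{(\,\cdot\,,\theta)\}$ itself (since $g$ is rotationally symmetric), but we instead want a geodesic of the as-yet-undetermined $g_\e$; the trick from \cite{GlSi78} is to prescribe the \emph{image curve} $c_{\e,\theta}$ freely (any curve from $(R_1,\theta)$ transverse to the circles, hitting $\{R_0\}\times S^1$ at $(R_0,\theta)$ with angle $\pi/2+\phi_\e(\theta)$ and arclength $l_\e(\theta)$ in the final metric), declare it a geodesic, and then solve for the metric coefficient $E_\e$ along and transverse to it. The length bookkeeping in the Remark — $L_\e = l_\e$ with the right constant, guaranteed by \eqref{E2.4} and $\int\sin\phi_\e=0$ — is exactly what makes this consistent. (3) Choose the family of image curves $c_{\e,\theta}$ to depend smoothly on $(\e,\theta)$, to equal the meridian when $\phi_\e(\theta)=0$ (in particular for $\theta\in\{\pi/2\}\cup[\pi,2\pi]$ and for $\e=0$), and to be supported, together with the resulting perturbation of $E_\e$, inside $(R_0+\e_0,R_1-\e_0)\times(0,\pi)$ for $|\e|$ small; shrinking $\e_0$ as needed gives (iii). (4) Read off $\Phi_\e$ as the map sending $(s,\theta)$ to the point at $g_\e$-arclength $s$ along $c_{\e,\theta}$; smoothness in all variables (ii) follows from smooth dependence of ODE solutions on parameters, and (i) is the $\e=0$ specialization.

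The main obstacle is Step (2): making the ``declare the curve a geodesic, then reconstruct the metric'' procedure both well-posed and smooth. Given a prescribed image curve $c_{\e,\theta}$ and a prescribed metric $g$ in a collar of $\{R_1\}\times S^1$, one must show there is a smooth Riemannian metric $g_\e$ on the strip that (a) restricts to $g$ near $r=R_1$, (b) makes each $c_{\e,\theta}$ a geodesic with the stated endpoint velocities, and (c) has the $c_{\e,\theta}$ foliating the strip. The natural approach is to build $g_\e$ in Fermi-type coordinates \emph{along} the foliation $\{c_{\e,\theta}\}$: the foliation is transverse to the meridians by the $(r\circ c_{\e,\theta})^{\boldsymbol\cdot}>0$ condition, so $(r,\theta)\mapsto(s,\theta)$ with $s=s_\e(r,\theta)$ the $g_\e$-arclength parameter is a diffeomorphism, and one pulls back the normal form $ds^2+E_\e^2d\theta^2$, choosing $E_\e$ to interpolate smoothly between the value forced at $s=l_\e(\theta)$ by matching $g$ and the value at $s=0$ forced by the prescribed crossing angle $\pi/2+\phi_\e(\theta)$ at $\{R_0\}\times S^1$ (which determines $|\partial_\theta c_{\e,\theta}|_{g_\e}$ there via the first variation / Gauss-lemma identity $g_\e(\dot c_{\e,\theta},\partial_\theta) = -\sin\phi_\e(\theta)$ consistent with $L_\e'=-\sin\phi_\e$). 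The delicate points are ensuring positive-definiteness of $E_\e$ throughout and smoothness across $\theta=\pi/2$ and across $\theta\in\{0,\pi\}$ where the support of the perturbation must taper to zero; both are handled by taking $|\e|$ small and by building the perturbation of the image curves from $\e\phi$ in a way that vanishes to all orders where $\phi$ does, which is where conditions \eqref{E2.1}–\eqref{E2.2} on $\phi$ are used.
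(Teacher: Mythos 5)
Your overall strategy is the one the paper uses (the Gluck--Singer scattering method): foliate $[R_0,R_1]\times S^1$ by curves that are radial $g$-geodesics near $\{R_1\}\times S^1$ and deflected $g$-geodesics near $\{R_0\}\times S^1$, declare the leaves to be unit-speed geodesics by writing the new metric in the normal form adapted to the foliation, and use $\int\sin\phi_\e=0$ to make the construction consistent. There is, however, a genuine gap in the execution: the ansatz $\Phi_\e^*g_\e=ds^2+E_\e(s,\theta)^2\,d\theta^2$, with $s$ the arclength measured from $(R_0,\theta)$, is incompatible with property (iii). Indeed, on the collar $[R_0,R_0+\e_0]\times S^1$ the leaves must be genuine $g$-geodesics (they are $g_\e$-geodesics and $g_\e=g$ there), and the cross term of the pullback of $g$ in your coordinates satisfies, at $s=0$,
\[
g\Bigl(\tfrac{\partial\Phi_\e}{\partial s},\tfrac{\partial\Phi_\e}{\partial\theta}\Bigr)\Big|_{(0,\theta)}=g\bigl(\dot c_{\e,\theta}(0),\partial_\theta|_{(R_0,\theta)}\bigr)=\sin\phi_\e(\theta)\,f(R_0)^2=\sin\phi_\e(\theta),
\]
which is nonzero for $\theta\in(0,\pi)\setminus\{\frac{\pi}{2}\}$ and, by the very normal-form lemma you invoke, constant in $s$ along each leaf. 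So $\Phi_\e^*g$ carries a nonvanishing $ds\,d\theta$ term on the inner collar while your $\Phi_\e^*g_\e$ carries none; hence $g_\e\neq g$ there, contradicting (iii), and no choice of $E_\e$ repairs this.

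The missing idea is to base the adapted coordinates at the boundary where the leaves are \emph{orthogonal}, i.e.\ to work with $\tilde\Phi_\e(s,\theta)=\Phi_\e(l_\e(\theta)-s,\theta)$, arclength measured from $\{R_1\}\times S^1$. Then the cross term vanishes identically near $s=0$ (the leaves are radial there), and at the inner boundary $s=l_\e(\theta)$ one computes $g(\partial_s\tilde\Phi_\e,\partial_\theta\tilde\Phi_\e)=-l_\e'(\theta)-\sin\phi_\e(\theta)=0$ by the definition \eqref{E2.4} of $l_\e$; this, rather than abstract ``length bookkeeping'', is precisely where $\int\sin\phi_\e=0$ and the normalization of $l_\e$ enter. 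With that parameterization the zero-cross-term normal form, with $E_\e^2$ simply set equal to $g(\partial_\theta\tilde\Phi_\e,\partial_\theta\tilde\Phi_\e)$ (automatically positive), reproduces $g$ on both collars and on $[R_0,R_1]\times[\pi,2\pi]$, and the rest of your plan goes through. Two smaller points: the leaves must coincide with actual $g$-geodesic \emph{arcs} on a whole collar of $\{R_0\}\times S^1$, not merely cross that circle at the prescribed angle (the paper arranges this by interpolating, via contractibility of $\mathrm{Diff}^+_0(S^1)$, between the foliation by deflected $g$-geodesics near $\{R_0\}$ and the radial foliation near $\{R_1\}$); and no ``vanishing to all orders'' of the perturbation is needed at $\theta\in\{0,\tfrac{\pi}{2},\pi\}$, since smoothness is automatic for a family built from $g$-geodesics, a fixed cutoff, and the smooth function $\e\phi$.
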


Note that (iv) implies $\Phi_\e(0,\theta)=(R_0,\theta)$ and $\Phi_\e(l_\e(\theta),\theta)=c_{\e,\theta}(l_\e(\theta))=(R_1,\theta)$. Except for differences in terminology 
the proof of Proposition \ref{P2.2} follows from the arguments given in \cite{GlSi78}. We present a complete proof of Proposition \ref{P2.2} in the appendix since some
properties of the construction that are relevant for our application are not explicitly mentioned in \cite{GlSi78}.

\section{Injective $g_\e$-geodesics}\label{sec3}

In this section we prove Proposition \ref{P1.6} and, consequently, Theorem \ref{T1}. We rely on the family of metrics $g_\e$ from Proposition \ref{P2.2}. First we collect some well known 
facts concerning the geodesics of the unperturbed metric $g=g_0$. Since $g=dr^2+f^2\circ r d\theta^2$ is invariant under rotations, i.e $\partial_\theta$ is a Killing 
vector field for $g$, the geodesic flow of $g$ has a second integral, namely Clairaut's integral $I$. We define $I$ on the unit tangent bundle $S\R^2$ of $g$ by 
\[
I\colon S\R^2\to [-1,1],\quad I(v)=g(\partial_\theta,v).
\]
There are the following types of $g$-geodesics $c\colon \R\to \R^2$: 
\begin{itemize}
\item[(3.1)]
If $I(\dot{c})=0$ then $c$ is orthogonal to all the circles $\{r\}\times S^1$, $r>0$, and, modulo translation of the parameter, we may assume that $r\circ c(0)=0$. Then 
there exists $\theta\in S^1$ such that $c$ is given in polar coordinates by 
\[
c(s)=\begin{cases} (s,\theta),&\text{ for }s>0\\ (-s,\theta+\pi),&\text{ for }s<0.
\end{cases}
\]
These geodesics will be called {\it radial}. 
\item[(3.2)] If $|I(\dot{c})|=1$ then properties \eqref{E1.1} and \eqref{E1.2} of $f$ imply that $r\circ c=\frac{\pi}{2}=R_0$. Hence $c$ parameterizes the circle $\{R_0\}\times S^1$ by arclength.
\item[(3.3)] If $0<|I(\dot{c})|<1$ let $r_-\in \left(0,\frac{\pi}{2}\right)$ and $r_+\in \left(\frac{\pi}{2},\infty\right)$ be defined by $f(r_-)=f(r_+)=|I(\dot{c})|$. Using the definition of $I$ one can find 
$s_0\in \R$ and $l_0>0$ such that $c(s_0+2nl_0)\in \{r_-\}\times S^1$ and $c(s_0+(2n+1)l_0)\in \{r_+\}\times S^1$ for all $n\in \Z$, while $(r\circ c)'(s)>0$ for $s\in 
\cup_{n\in \Z} (s_0+2nl_0,s_0+(2n+1)l_0)$ and $(r\circ c)'(s)<0$ for $s\in \cup_{n\in \Z} (s_0+(2n-1)l_0,s_0+2nl_0)$. In particular, we have $c(\R)\subseteq [r_-,r_+]\times S^1$. 
\end{itemize}

Next we exhibit the injective $g_\e$-geodesics whose existence is claimed in Proposition \ref{P1.6}(ii). Let $c_0$ denote a radial $g$-geodesic intersecting the circle $\{R_1\}\times S^1$
in the points $(R_1,0)$ and $(R_1,\pi)$, and let $c_1$ denote a radial $g$-geodesic intersecting $\{R_1\}\times S^1$ in the points $(R_1,\frac{\pi}{2})$ and $(R_1,\frac{3\pi}{2})$. 
Recall that all the metrics $g_\e$ coincide with $g$ on $\{R_1\}\times S^1$. So, being orthogonal to $\{R_1\}\times S^1$ is independent of $\e$. 

\begin{lemma}\label{L3.1}
Let $c$ be a $g_\e$-geodesic that intersects $\{R_1\}\times S^1$ orthogonally at one of the points $(R_1,0),\; (R_1,\frac{\pi}{2}),\; (R_1,\pi)$, or $(R_1,\frac{3\pi}{2})$. Then 
$c(\R)=c_0(\R)$ or $c(\R)=c_1(\R)$. In particular, up to parameterization these correspond to precisely two $g_\e$-geodesics. Moreover, these $g_\e$-geodesics are injective.
\end{lemma}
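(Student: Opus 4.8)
The plan is to trace the four distinguished geodesics $c$ through the three regions of the plane and show that each of them coincides, as a set, with either $c_0$ or $c_1$.

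\textbf{Outline of the argument.} Suppose $c$ is a $g_\e$-geodesic hitting $\{R_1\}\times S^1$ orthogonally at $(R_1,\theta_0)$ with $\theta_0\in\{0,\tfrac{\pi}{2},\pi,\tfrac{3\pi}{2}\}$. By symmetry it suffices to follow $c$ in the direction of increasing $r$ from $(R_1,\theta_0)$; the other direction is handled the same way after reflecting, and the two halves together give a complete bi-infinite geodesic. First I would note that since $g_\e=g$ on $\{r\ge R_1\}$ (this region lies outside the support of the perturbation by Proposition~\ref{P2.2}(iii)), the geodesic $c$ restricted to $\{r\ge R_1\}$ is a $g$-geodesic that is orthogonal to the circle $\{R_1\}\times S^1$, hence has Clairaut integral $I(\dot c)=0$; by case (3.1) it is therefore the radial $g$-geodesic, i.e. $c(s)=(R_1+s,\theta_0)$ for $s\ge 0$, which stays on the ray $\theta=\theta_0$ forever. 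Symmetrically, following $c$ from $(R_1,\theta_0)$ into decreasing $r$ until it reaches $\{R_0\}\times S^1$: on the annulus $[R_0,R_1]\times S^1$ the metric $g_\e$ is the one produced by Proposition~\ref{P2.2}, and the geodesic emanating orthogonally inward from $(R_1,\theta_0)$ is exactly (the time-reversal of) $c_{\e,\theta_0}$; by part (iv) it reaches $\{R_0\}\times S^1$ at the point $(R_0,\theta_0)$, with $(r\circ c)$ strictly monotone in between, so it meets $\{R_0\}\times S^1$ exactly once.

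\textbf{Crossing into the inner disk.} At $(R_0,\theta_0)$ the geodesic enters the region $\{r\le R_0\}$, where again $g_\e=g$ and $g$ is the round spherical metric (by \eqref{E1.1}, since $R_0<R_1\le \tfrac{3\pi}{4}$). Here the key point is to use the specific values $\theta_0\in\{0,\tfrac{\pi}{2},\pi,\tfrac{3\pi}{2}\}$: these are exactly the points where $\phi_\e(\theta_0)=0$ by \eqref{E2.2} (note $\phi^{-1}(0)=\{\tfrac{\pi}{2}\}\cup[\pi,2\pi]$ contains all four values). Hence by Proposition~\ref{P2.2}(iv) the velocity $\dot c_{\e,\theta_0}(0)=\partial_r|_{(R_0,\theta_0)}$, i.e. $c$ hits $\{R_0\}\times S^1$ orthogonally, so inside the disk $c$ is again a \emph{radial} $g$-geodesic: it runs straight along the great circle $\theta\in\{\theta_0,\theta_0+\pi\}$, passes through the origin, and emerges at $(R_0,\theta_0+\pi)$. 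Now $\theta_0+\pi$ is again one of the four distinguished values ($0\leftrightarrow\pi$, $\tfrac{\pi}{2}\leftrightarrow\tfrac{3\pi}{2}$), so the argument of the previous paragraph applies with $\theta_0$ replaced by $\theta_0+\pi$: $c$ crosses the annulus once along $c_{\e,\theta_0+\pi}$, reaches $(R_1,\theta_0+\pi)$ orthogonally, and then runs off radially to infinity along the ray $\theta=\theta_0+\pi$. Piecing the two halves together, the full geodesic $c$ visits the rays $\theta=\theta_0$ and $\theta=\theta_0+\pi$ in the outer region and the corresponding great circle in the inner disk; this is precisely $c_0(\R)$ when $\theta_0\in\{0,\pi\}$ and $c_1(\R)$ when $\theta_0\in\{\tfrac{\pi}{2},\tfrac{3\pi}{2}\}$.

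\textbf{Injectivity.} Finally I would verify that $c$ is injective as a map $\R\to\R^2$. This decomposes along the pieces: on each outer ray $r$ is strictly increasing in $|s|$, so $c$ is injective there and the two outer rays $\theta=\theta_0$, $\theta=\theta_0+\pi$ are disjoint except possibly at the origin, which the outer pieces never reach; on the annulus pieces $r\circ c$ is strictly monotone by Proposition~\ref{P2.2}(iv), hence injective, and the two annulus crossings lie over $\theta=\theta_0$ and $\theta=\theta_0+\pi$ so are disjoint; on the inner great-circle segment the geodesic is a minimizing arc of length $2R_0=\pi<2\pi$ (less than the injectivity of the sphere along a great circle away from the antipode) joining $(R_0,\theta_0)$ to $(R_0,\theta_0+\pi)$ through the pole, so it is injective; and the four junction points $(R_0,\theta_0),(R_1,\theta_0),(R_0,\theta_0+\pi),(R_1,\theta_0+\pi)$ are distinct and each lies on exactly two consecutive pieces. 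Concatenating injective arcs with matched, otherwise-disjoint images gives an injective line. \textbf{The main obstacle} I anticipate is the bookkeeping at the inner-disk passage: one must be sure that a $g$-geodesic entering the spherical cap orthogonally to $\{R_0\}\times S^1$ really is the radial one — this needs that orthogonality to $\{R_0\}\times S^1$ forces $|I(\dot c)|=1$, hence by case (3.2) $r\circ c\equiv R_0$, which is \emph{false}; rather, the correct statement is that a geodesic meeting $\{R_0\}\times S^1$ orthogonally and then moving to smaller $r$ must have $I(\dot c)=0$ because at a turning point where $r'=0$ and $r$ has an interior minimum one gets $|I|=f(R_0)=1$, but here $r$ is \emph{decreasing} through $R_0$, not at a minimum, so the inward geodesic is the one with $I=0$; making this dichotomy rigorous (and ruling out that $c$ turns around before reaching the pole) via the Clairaut analysis in (3.1)–(3.3) and the monotonicity of $f$ on $(0,R_0)$ is the one spot requiring genuine care.
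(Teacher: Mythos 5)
Your proof is correct in substance and uses the same ingredients as the paper's (Proposition \ref{P2.2}(iii)--(v) plus the Clairaut classification (3.1)--(3.3)), but the paper's own argument is much shorter: it simply observes that $c_0$ and (via (iv),(v)) $c_1$ are themselves $g_\e$-geodesics meeting $\{R_1\}\times S^1$ orthogonally at the four distinguished points, so any $c$ as in the lemma coincides with one of them by uniqueness of geodesics with prescribed initial data; your region-by-region trace reconstructs the same curves from scratch. Two points deserve comment. First, your ``main obstacle'' paragraph is a red herring built on a misstatement: orthogonality of $\dot c$ to the circle $\{R_0\}\times S^1$ means $g(\partial_\theta,\dot c)=0$, i.e. $I(\dot c)=0$ directly from the definition of Clairaut's integral --- not $|I(\dot c)|=1$, which corresponds to tangency. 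Case (3.1) therefore applies immediately, and the entire turning-point discussion can be deleted; there is nothing there requiring ``genuine care.''

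Second, there is a small genuine gap in the final identification of the image with $c_0(\R)$ resp.\ $c_1(\R)$. Proposition \ref{P2.2}(iv) tells you that the annulus crossing is (a reparameterization of) $c_{\e,\theta_0}$ with $r\circ c_{\e,\theta_0}$ strictly monotone and with radial endpoints, but it does \emph{not} say that its image is the radial segment $[R_0,R_1]\times\{\theta_0\}$; part (v) guarantees this only for $\theta_0=\frac{\pi}{2}$. For $\theta_0\in\{0,\pi,\frac{3\pi}{2}\}$ you must invoke Proposition \ref{P2.2}(iii): the set on which $g_\e$ and $g$ coincide contains each of these rays together with an open one-sided collar, so by smoothness all derivatives of the two metrics (hence their Christoffel symbols) agree along these rays, the radial $g$-segments there are also $g_\e$-geodesics, and uniqueness then forces $c_{\e,\theta_0}$ to be radial. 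Without this step your traced curve is not literally $c_0(\R)$, and the disjointness of the two annulus crossings used in your injectivity argument is also left unjustified. Once this is added, the proof is complete.
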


\begin{proof}
By Proposition \ref{P2.2}(iii) the radial $g$-geodesic $c_0$ is a $g_\e$-geodesics for every $\e\in (-\e_0,\e_0)$ and $c_0$ intersects $\{R_1\}\times S^1$ orthogonally in the points
$(R_1,0)$ and $(R_1,\pi)$. Hence $c(\R)=c_0(\R)$ if $c$ is a $g_\e$-geodesic intersecting $\{R_1\}\times S^1$ orthogonally at $(R_1,0)$ or at $(R_1,\pi)$. Since 
$\phi(\frac{\pi}{2})=\phi(\frac{3\pi}{2})=0$ a $g_\e$-geodesic intersecting $\{R_1\}\times S^1$ orthogonally at $(R_1,\frac{\pi}{2})$ or at $(R_1,\frac{3\pi}{2})$ satisfies $c(\R)=c_1(\R)$
by Proposition \ref{P2.2}(iii),(iv), and (v).
\end{proof}

\begin{remark}
If $c$ is a $g_\e$-geodesic satisfying $c(\R)=c_0(\R)$ then, by Proposition \ref{P2.2}(iii), there exist $a\in \{-1, 1\}$, $s_0\in \R$, such that $c(s)=c_0(as+s_0)$ for all $s\in \R$. On the 
other hand, if $\e\neq 0$ and $c$ is a $g_\e$-geodesic satisfying $c(\R)=c_1(\R)$, then the segment $[R_0,R_1]\times \{\frac{\pi}{2}\}$ is part of both $c(\R)$ and $c_1(\R)$, but its
$g_\e$-length $l_\e(\frac{\pi}{2})$ is different from its $g$-length $R_1-R_0=\frac{\pi}{4}$, see \eqref{E2.5}.
\end{remark}

Before we start the formal proof of Proposition \ref{P1.6} we give a brief outline. We assume that $c$ is an injective $g_\e$-geodesic and $|\e|>0$ is small. We want to show that $c$ 
intersects $\{R_1\}\times S^1$ orthogonally at one of the points $(R_1,0)$, $(R_1,\frac{\pi}{2})$, $(R_1,\pi)$, or $(R_1,\frac{3\pi}{2})$. Then Proposition \ref{P1.6} will follow from Lemma 
\ref{L3.1}. First we will see that the ends of $c$ lie on radial geodesics, see Lemma \ref{L3.5} and \ref{L3.6}. Then, using property \eqref{E2.2} of $\phi$, we will prove that a radial geodesic 
coming in from infinity and hitting $\{R_1\}\times S^1$ at a point $(R_1,\theta)$ with $\theta\notin \{0,\frac{\pi}{2},\pi,\frac{3\pi}{2}\}$ will be deflected by $g_\e$ so that the other end of $c$ will 
not be radial, in contradiction to the preceding statement. 

The formal proof of Proposition \ref{P1.6} will rely on the following Lemmas \ref{L3.3}-\ref{L3.7}.

\begin{lemma}\label{L3.3}
There exists $\e_1\in (0,\e_0)$ such that for every $\e\in(-\e_1,\e_1)$ and every $g_\e$-geodesic $c$ satisfying $r\circ c(0)\ge R_1$ and $\dot{c}(0)\neq \partial_r|_{c(0)}$ there exists 
$s>0$ such that $r\circ c(s)=R_0$.
\end{lemma}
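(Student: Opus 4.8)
The plan is to reduce the claim to a convexity property of the intermediate coordinate circles and then to run a monotonicity argument on the function $r\circ c$: for $|\e|$ small, a $g_\e$-geodesic that once crosses $\{R_1\}\times S^1$ inward can never turn around again before reaching $\{R_0\}\times S^1$, because the circles $\{\rho\}\times S^1$, $\rho\in(R_0,R_1]$, stay strictly convex ``seen from the side of larger $r$'' for all $g_\e$ with $|\e|$ small. \emph{Step 1 (convexity of the intermediate circles).} Along a $g_\e$-geodesic $c$ one has $(r\circ c)''=\mathrm{Hess}_{g_\e}(r)(\dot c,\dot c)$, since $c$ is a geodesic; at a parameter $\bar s$ with $(r\circ c)'(\bar s)=0$ the vector $\dot c(\bar s)$ is tangent to the circle $\{\rho\}\times S^1$ through $c(\bar s)$, $\rho=r\circ c(\bar s)$. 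For the unperturbed metric, $\mathrm{Hess}_g(r)(v,v)=\cot\rho$ for every $g$-unit $v$ tangent to $\{\rho\}\times S^1$, which is strictly negative for $\rho\in(\frac\pi2,\frac{3\pi}{4}]=(R_0,R_1]$ and vanishes only at $\rho=R_0$. Since $g_\e=g$ for $r\notin(R_0+\e_0,R_1-\e_0)$ and $g_\e\to g$ in $C^\infty$ while $\cot\rho$ is bounded away from $0$ on the compact band $[R_0+\e_0,R_1-\e_0]$, I can fix $\e_1\in(0,\e_0)$ so that for $|\e|<\e_1$: along every $g_\e$-geodesic $c$, if $(r\circ c)'(\bar s)=0$ and $R_0<r\circ c(\bar s)\le R_1$, then $(r\circ c)''(\bar s)<0$. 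Consequently $r\circ c$ has no interior local minimum while it takes values in $(R_0,R_1)$, and every critical point there is a strict local maximum. (That the convexity degenerates exactly at $\rho=R_0$ is harmless, because $g_\e=g$ near $\{R_0\}\times S^1$ and the final descent through $R_0$ will be handled by Clairaut's integral.)

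\emph{Step 2 (entering the annulus with $r$ decreasing).} Let $c$ be a $g_\e$-geodesic with $r\circ c(0)\ge R_1$ and $\dot c(0)\ne\partial_r|_{c(0)}$. First, $r\circ c(s)<R_1$ for some $s>0$: otherwise $c|_{[0,\infty)}$ would be a $g$-geodesic with conserved Clairaut integral $I(\dot c)$, but (3.2) forces $r\circ c\equiv R_0$ if $|I(\dot c)|=1$, (3.3) gives $\liminf_{s\to\infty}r\circ c(s)=\arcsin|I(\dot c)|<R_0$ if $0<|I(\dot c)|<1$, and (3.1) shows that if $I(\dot c)=0$ then $c$ is radial and, since $\dot c(0)\ne\partial_r$, runs inward through $r=R_0$; each contradicts $r\circ c\ge R_1$. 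Put $\tau:=\inf\{s>0\mid r\circ c(s)<R_1\}$; then $r\circ c(\tau)=R_1$ and $(r\circ c)'(\tau)\le0$, and if equality holds Step 1 (with $\rho=R_1$, where $g_\e=g$) shows $\tau$ is a strict local maximum. In either case there is $s_2>\tau$ with $r\circ c(s_2)<R_1$ and $(r\circ c)'(s_2)<0$; if $r\circ c(s_2)\le R_0$ the intermediate value theorem already produces a parameter where $r\circ c=R_0$, so we may assume $R_0<r\circ c(s_2)<R_1$.

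\emph{Step 3 (monotone descent to $R_0$).} Suppose for contradiction $r\circ c(s)>R_0$ for all $s>0$. Then $r\circ c$ is strictly decreasing on $[s_2,\infty)$: the first $s_3>s_2$ at which $(r\circ c)'$ returned to $0$ would, by Step 1 (its value lying in $(R_0,R_1)$), be a strict local maximum, contradicting strict decrease on $[s_2,s_3)$. A strictly decreasing $r\circ c$ bounded below by $R_0$ cannot converge to $\ell\in(R_0,R_1)$ either: then $c(s)$ would eventually stay in a compact annulus with $r$-values in a compact subinterval of $(R_0,R_1)$ and $(r\circ c)'(s)\to0$, and compactness plus the strict inequality of Step 1 would force $(r\circ c)''(s)<-\kappa<0$ eventually, incompatible with $(r\circ c)'\to0$. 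Hence $r\circ c$ decreases below $R_0+\e_0$; there $g_\e=g$, so $c$ coincides with a $g$-geodesic whose conserved Clairaut integral satisfies $|I(\dot c)|<1$ (the value $1$ being excluded since $r\circ c$ is nonconstant), and by (3.1)/(3.3) such a $g$-geodesic with decreasing $r$-coordinate descends past $r=R_0$ --- contradicting $r\circ c>R_0$. Therefore $r\circ c(s)=R_0$ for some $s>0$, which is the assertion.

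The main obstacle is Step 1: checking that the $C^\infty$-small, symmetry-breaking perturbation cannot make an intermediate circle $\{\rho\}\times S^1$, $\rho\in(R_0,R_1)$, tangentially accessible to a geodesic from the side of larger $r$, while keeping track of the fact that this convexity degenerates precisely at $\rho=R_0$. The argument survives because $g_\e$ coincides with $g$ on a fixed neighbourhood of $\{R_0\}\times S^1$, so the degeneracy there is irrelevant, and Clairaut's integral for $g$ then closes the descent through $\{R_0\}\times S^1$.
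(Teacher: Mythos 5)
Your argument is correct in substance, but it is a genuinely different proof from the one in the paper. The paper argues by contradiction and compactness: if the statement failed for a sequence $\e_i\to 0$, one could extract parameters $t_i$ with $r\circ c_i(t_i)=R_1$ and $(r\circ c_i)'(t_i)\le 0$ and pass to a limiting $g$-geodesic that touches $\{R_1\}\times S^1$ non-outward yet stays in $\{r\ge R_0\}$ forever --- which the Clairaut classification (3.1)--(3.3) forbids. You instead give a direct, quantitative argument: the circles $\{\rho\}\times S^1$, $\rho\in(R_0,R_1]$, are strictly concave towards larger $r$ (i.e. $\mathrm{Hess}(r)<0$ on tangential unit vectors, uniformly for $|\e|$ small on the perturbed band and exactly, via $f'/f<0$, where $g_\e=g$), so $r\circ c$ can have no interior minimum in $(R_0,R_1)$ and must descend monotonically into the region $\{r<R_0+\e_0\}$, where Clairaut's integral finishes the job. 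Your route is more self-contained and explains \emph{why} the lemma holds, at the cost of more ODE bookkeeping; the paper's route is shorter but leans on soft compactness of the space of geodesics. One step of yours is stated too glibly and should be tightened: in Step 3 you assert that $(r\circ c)'\to 0$ when $r\circ c$ decreases to a limit $\ell\in(R_0,R_1)$, and that $(r\circ c)''<-\kappa$ ``eventually''. The first needs the uniform continuity of $(r\circ c)'$ (which follows from the uniform bound on $\mathrm{Hess}_{g_\e}(r)$ over the compact annulus, i.e. Barbalat's lemma), and the second needs the observation that the Hessian quadratic form remains $\le-\kappa/2$ on unit vectors whose radial component is small, by continuity on the compact unit bundle; with these two remarks the contradiction with $(r\circ c)'\ge -1$ closes. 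Your Steps 1 and 2, and the final Clairaut descent through $\{R_0\}\times S^1$, are fine as written.
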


\begin{proof}
Otherwise we could find sequences $\e_i\to 0$ and $g_{\e_i}$-geodesics $c_i$ such that $r\circ c_i(0)\ge R_1$, $\dot{c}_i(0)\neq \partial|_{c_i(0)}$, and $r\circ c_i(s)>R_0$ for all 
$s>0$. Since the $c_i$ are $g$-geodesics as long as they are contained in $[R_1,\infty)\times S^1$ we can use (3.1)-(3.3) to find a sequence $t_i\ge 0$ such that $r\circ c_i(t_i)=R_1$ 
and $(r\circ c_i)'(t_i)\le 0$. Using Proposition \ref{P2.2}(ii) we see that a subsequence of the sequence of $g_{\e_i}$-geodesics $s\to c_i(t_i+s)$ converges compactly to a $g$-geodesic
$c$ such that $r\circ c(0)=R_1$, $(r\circ c)'(0)\le 0$, and $r\circ c(s)\ge R_0$ for all $s>0$. But, according to (3.1)-(3.3), a $g$-geodesic with these properties does not exist.
\end{proof}

\begin{lemma}\label{L3.4}
Let $c$ be an injective $g_\e$-geodesic. If $J_c=\{s\in \R|\; r\circ c(s)\le R_0\}\neq \emptyset$, then $J_c$ is an interval of length $\pi$.
\end{lemma}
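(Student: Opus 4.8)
The plan is to analyze the set $J_c = \{s\in\R\mid r\circ c(s)\le R_0\}$ using the fact that on the region $r\le R_0 = \frac{\pi}{2}$ the metric $g_\e$ agrees with $g$ (by Proposition \ref{P2.2}(iii), the perturbation is supported in $(R_0+\e_0,R_1-\e_0)\times(0,\pi)$, hence $g_\e=g$ on $\{r\le R_0\}$), and on this region $g$ is the round metric of the hemisphere $\{0\le r\le \frac{\pi}{2}\}$. So along any component of $J_c$ the curve $c$ is a geodesic of the round unit sphere, and I can invoke the classification (3.1)--(3.3) of $g$-geodesics. First I would observe that $J_c$ is closed, and that if $c$ ever enters the open disk $\{r<R_0\}$ it is a nonconstant $g$-geodesic there, so by (3.1)--(3.3) either it is radial (case (3.1), $I(\dot c)=0$) — in which case it passes through the origin, goes from $r=0$ out to $r=R_0$ on both sides, and the portion with $r\le R_0$ has arclength exactly $\frac{\pi}{2}+\frac{\pi}{2}=\pi$ — or it has $0<|I(\dot c)|<1$ (case (3.2) with $|I|=1$ forces $r\equiv R_0$, which is the boundary circle, already handled), in which case by (3.3) the geodesic oscillates between $r_-$ and $r_+$ with $r_+>R_0$, and the closure of any maximal sub-interval on which $r\le R_0$ runs from one crossing of $\{r=R_0\}$ down to $r_-$ and back up to the next crossing of $\{r=R_0\}$.

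The key computation is that in case (3.3) the length of such a maximal sub-interval of $J_c$ is exactly $\pi$, independent of $I(\dot c)$. This is the Clairaut computation on the round sphere: writing $u=\sin r$ (so the profile is $f(r)=\sin r$) and $|I(\dot c)|=\cos\alpha$ for the crossing angle $\alpha$ at $r=R_0$, the change in the angular coordinate $\theta$ while $r$ descends from $R_0$ to $r_-$ and returns equals $2\int_{r_-}^{R_0}\frac{|I|}{f\sqrt{f^2-I^2}}\,dr$, and the arclength of that descent-and-return equals $2\int_{r_-}^{R_0}\frac{f}{\sqrt{f^2-I^2}}\,dr$; substituting $f=\sin r$ one gets, after the substitution $\cos r = \sqrt{1-I^2}\,\sin\psi$, that this arclength equals $\pi$ — geometrically, great circles on the unit sphere all have length $2\pi$, and a great circle tangent internally to a parallel circle at height $r_-$ and crossing the parallel $\{r=R_0\}$ spends exactly half its length, namely $\pi$, in the cap $\{r\le R_0\}$, by the symmetry $r\mapsto \pi-r$ of the sphere together with the fact that $R_0=\frac{\pi}{2}$ is the equator. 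That last point — that $R_0$ is precisely the equator, so that "below the equator'' is exactly half the great circle — is what makes the length come out to $\pi$ on the nose, and is exactly why the metric $g$ was engineered with $f(r)=\sin r$ near $R_0$.

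Having shown that $J_c$ is a disjoint union of closed intervals each of length $\pi$ (with the degenerate radial case also giving a single interval of length $\pi$, and noting that an isolated point of $J_c$ — a tangential touching of $\{r=R_0\}$ from above — cannot occur for a geodesic unless the geodesic lies in $\{r=R_0\}$, since $(r\circ c)''$ is controlled by the geodesic equation and a critical point of $r\circ c$ with $r=R_0$ forces $|I(\dot c)|=1$ hence $r\equiv R_0$), I would conclude by using injectivity of $c$ to rule out more than one such interval. The point is that if $J_c$ had two distinct components, then on each, $c$ would be a great-circle arc in the hemisphere entering and leaving through the equator; but two such arcs would either have to be arcs of the same great circle (the unique $g$-geodesic through the relevant initial conditions at the re-entry point, using uniqueness of geodesics) or else $c$ would revisit the region $\{r\le R_0\}$ along a genuinely different great circle — and I claim the only way $c$ can leave $\{r\le R_0\}$, travel through $\{r>R_0\}$, and come back is if it is the closed geodesic $\{r=R_0\}$ itself or if the two visits lie on the same great circle through the origin, in either case contradicting injectivity (a closed geodesic, or a geodesic retracing a great-circle arc, is not injective on $\R$). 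More precisely: a component of $J_c$ ends at a point $(R_0,\theta_1)$ where $c$ crosses outward; it then runs in $\{r\ge R_0\}$; the next component of $J_c$, if any, begins at some $(R_0,\theta_2)$ where $c$ crosses inward; inside $\{r\le R_0\}$ the whole of $c$ restricted to that component is a great semicircle, which is determined by its endpoints, and I would derive that the two great semicircles together with the connecting arc in $\{r\ge R_0\}$ force a self-intersection or a closed orbit. The main obstacle I anticipate is precisely this last step — cleanly excluding a second component of $J_c$ — because it requires understanding how the perturbed flow in $\{r\ge R_0\}$ interacts with re-entry into the unperturbed cap; I expect one must argue that a geodesic leaving the equator into the upper region, under the combined effect of $g_\e$ on $[R_0,R_1]\times S^1$ and $g$ on $[R_1,\infty)\times S^1$, can only return to the equator in a way that reproduces the same great circle (so that $c$ would be closed), and this is where Lemmas \ref{L3.3}, \ref{L3.5}, \ref{L3.6} about the ends being radial, and property \eqref{E2.2} of $\phi$, will be needed — so it may be that the clean statement is that \emph{assuming} the conclusions available at this point in the paper, two components are impossible, and the genuinely self-contained part of Lemma \ref{L3.4} is just the length-$\pi$ computation for a single component, with uniqueness of the component folded into the surrounding argument.
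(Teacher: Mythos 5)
Your setup is right (the perturbation is supported away from $\{r\le R_0\}$, so there $g_\e$ is the round hemisphere metric, and each component of $J_c$ traces an arc of a great circle of length $\pi$ --- though your Clairaut integral is overkill: each component is simply the intersection of a non-equatorial great circle with the closed hemisphere, hence half of that great circle). But the step you flag as the ``main obstacle'' --- excluding a second component of $J_c$ --- is exactly the content of the lemma, and the idea you are missing is a one-line fact of spherical geometry: \emph{any two half great circles in a closed hemisphere intersect}. Indeed, two distinct great circles meet in a pair of antipodal points, and every closed hemisphere contains at least one point of each antipodal pair (if $r(p)>\pi/2$ then $r(-p)=\pi-r(p)<\pi/2$); if the two arcs lie on the same great circle they are both equal to its half lying in the hemisphere. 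So two components of $J_c$ would force a self-intersection of $c$ inside $\{r\le R_0\}$, contradicting injectivity directly. No analysis of how $c$ travels through $\{r\ge R_0\}$ is needed.

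Your proposed fallback --- invoking the perturbation structure, property \eqref{E2.2}, and Lemmas \ref{L3.3}, \ref{L3.5}, \ref{L3.6} to control re-entry into the cap --- does not work as stated: Lemma \ref{L3.5} is proved \emph{from} Lemma \ref{L3.4}, so leaning on it here would be circular, and Lemma \ref{L3.6} comes later as well. Moreover your intermediate claim that a geodesic leaving and re-entering the cap must reproduce ``the same great circle'' is unsubstantiated and is not the mechanism at work. The lemma is genuinely self-contained once one sees the hemisphere intersection fact, and that fact is the missing idea in your proposal.
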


\begin{proof}
Recall that $[0,R_0]\times S^1$ with the metric $g_\e$ is isometric to a round hemisphere of radius one. Since $c$ is injective it does not parameterize the boundary great circle 
$\{R_0\}\times S^1$. Hence, if $I$ is a component of $J_c$ then $c|_I$ parameterizes half of a great circle in this hemisphere. Since any two such half great circles intersect, we see
that $J_c$ is connected and an interval of length $\pi$.
\end{proof}

Combining Lemma \ref{L3.3} and \ref{L3.4} we deduce

\begin{lemma}\label{L3.5}
Let $c$ be an injective $g_\e$ -geodesic and $|\e|<\e_1$. If $s\in\R$ and $r\circ c(s)\ge R_1$, then $\dot{c}(s)\in \{- \partial_r|_{c(s)},\partial_r|_{c(s)}\}$.
\end{lemma}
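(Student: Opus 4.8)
The plan is to argue by contradiction. Suppose $c$ is an injective $g_\e$-geodesic with $|\e|<\e_1$, and suppose there is some $s_0\in\R$ with $r\circ c(s_0)\ge R_1$ but $\dot c(s_0)\notin\{-\partial_r|_{c(s_0)},\partial_r|_{c(s_0)}\}$. After reparameterizing (translating the parameter) I may assume $s_0=0$. Since $\dot c(0)$ is not $\pm\partial_r$, Lemma \ref{L3.3} applies — possibly after first replacing $c$ by $s\mapsto c(-s)$, I can arrange that $\dot c(0)\neq\partial_r|_{c(0)}$ — and it produces some $s>0$ with $r\circ c(s)=R_0$. Hence the set $J_c=\{s\in\R\mid r\circ c(s)\le R_0\}$ is nonempty, so by Lemma \ref{L3.4} it is a closed interval of length exactly $\pi$; write $J_c=[a,b]$ with $b-a=\pi$. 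The goal is to contradict the injectivity of $c$ by showing that outside $J_c$ the geodesic is forced to wander off to $r=\infty$ in both directions in a way that makes the two ``ends'' symmetric and the whole curve periodic, or — more simply — by deriving that $c$ must re-enter the region $r\le R_0$, contradicting that $J_c$ has only one component.

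Here is the mechanism I would push. Just after leaving $J_c$ at the parameter $b$, we have $r\circ c(b)=R_0$ and, by continuity and the fact that on $J_c$ the curve parameterizes half of a great circle in the hemisphere $[0,R_0]\times S^1$, the tangent vector $\dot c(b)$ points out of that hemisphere, i.e.\ $(r\circ c)'(b)>0$ (it cannot vanish, since a $g_\e$-geodesic tangent to $\{R_0\}\times S^1$ at $r=R_0$ would, by the sphere geometry on $[0,R_0]\times S^1$, be the great circle itself, violating injectivity). On the interval where $R_0\le r\circ c\le R_1$ the geodesic is governed by Proposition \ref{P2.2}: it must be one of the curves $c_{\e,\theta}$ (or its reverse), since by (iv) these foliate $[R_0,R_1]\times S^1$. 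Tracking through to $r=R_1$ it exits at some angle and by (iii) thereafter it is a genuine $g$-geodesic in $[R_1,\infty)\times S^1$. Now I invoke the classification (3.1)–(3.3): a $g$-geodesic in $[R_1,\infty)\times S^1$ that is not radial has $0<|I(\dot c)|<1$ (the case $|I|=1$ is excluded because $r\ge R_1>R_0$), so by (3.3) it stays in $[r_-,r_+]\times S^1$ with $r_-<R_0$; consequently it comes back down, re-enters $r\le R_0$, and produces a second component of $J_c$ — contradicting Lemma \ref{L3.4}. The remaining possibility is that the $g$-geodesic emerging past $\{R_1\}\times S^1$ is radial; but a radial $g$-geodesic hits $\{R_1\}\times S^1$ orthogonally, so back-tracking through Proposition \ref{P2.2}(iv) it entered $[R_0,R_1]\times S^1$ as $c_{\e,\theta}$ for the $\theta$ with this boundary point, and (iv) tells us it met $\{R_0\}\times S^1$ at $(R_0,\theta)$ with angle $\frac\pi2+\phi_\e(\theta)$; matching this against the known behaviour at the endpoint $b$ of $J_c$ — where $c$ exits the hemisphere along a great-circle arc, i.e.\ at a definite angle — forces $\phi_\e(\theta)=0$, hence (by \eqref{E2.3}) $\theta\in\{0,\frac\pi2,\pi,\frac{3\pi}{2}\}$, and one checks this pins $\dot c(0)$ down enough to contradict the standing assumption that $\dot c(0)\neq\pm\partial_r$ at the original point, or else forces periodicity of $c$ and hence non-injectivity.

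The main obstacle is the bookkeeping at the boundary $\{R_0\}\times S^1$: one must verify cleanly that a geodesic arc of $c$ meeting $r=R_0$ transversally from the hemisphere side is exactly matched, via the diffeomorphism $\Phi_\e$ and property (iv), to one of the $c_{\e,\theta}$, and — crucially — that the incoming angle on the hemisphere side and the prescribed angle $\frac\pi2+\phi_\e(\theta)$ are the same quantity. I would handle this by working in the $(s,\theta)$ coordinates of Proposition \ref{P2.2} and using that $\Phi_\e$ is a diffeomorphism onto $[R_0,R_1]\times S^1$, so every unit tangent direction at a point of $\{R_0\}\times S^1$ pointing into the slab is realized by a unique $\dot c_{\e,\theta}(0)$; combined with uniqueness of geodesics this identifies the arc. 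The other delicate point is the reduction to $\dot c(0)\neq\partial_r$ so that Lemma \ref{L3.3} is actually applicable — but this is free, since reversing the parameter of a geodesic sends $\partial_r$ to $-\partial_r$ and preserves injectivity and the hypothesis $r\circ c(0)\ge R_1$.
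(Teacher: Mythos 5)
Your opening is on the right track, but you stop exploiting your hypothesis one step too early and then compensate with a long argument that contains a genuine error. The hypothesis $\dot c(s)\notin\{-\partial_r|_{c(s)},\partial_r|_{c(s)}\}$ means that \emph{both} $c$ and its reversal $t\mapsto c(s-t)$ satisfy the hypothesis of Lemma \ref{L3.3} at the point $c(s)$. Applying Lemma \ref{L3.3} in both time directions yields $s_-<s<s_+$ with $r\circ c(s_\pm)=R_0$, so $s_-,s_+\in J_c$; since $J_c$ is an interval by Lemma \ref{L3.4} we get $s\in[s_-,s_+]\subset J_c$, contradicting $r\circ c(s)\ge R_1>R_0$. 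That is the entire proof in the paper. You instead apply Lemma \ref{L3.3} only once (after possibly reversing $c$), obtain one component of $J_c$, and then try to manufacture a second crossing of $\{R_0\}\times S^1$ by following the geodesic forward out of the hemisphere --- which is exactly the harder direction, whereas the second crossing you need is already sitting on the other side of $s$ for free.

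The compensating argument does not work as written. The step ``on the interval where $R_0\le r\circ c\le R_1$ the geodesic must be one of the curves $c_{\e,\theta}$ (or its reverse), since by (iv) these foliate $[R_0,R_1]\times S^1$'' is false: a foliation of a region by geodesics does not contain every geodesic of that region (horizontal lines foliate the flat plane by geodesics, yet most geodesics are not horizontal). Proposition \ref{P2.2}(iv) identifies a geodesic arc with some $c_{\e,\theta}$ only when its initial vector at $(R_0,\theta)$ equals $\dot c_{\e,\theta}(0)$, or when it meets $\{R_1\}\times S^1$ orthogonally (this uniqueness is what Lemma \ref{L3.7} uses); a geodesic leaving the hemisphere at $b$ at an arbitrary angle need not be any $c_{\e,\theta}$, need not reach $\{R_1\}\times S^1$ at all, and its behaviour in the annulus is not controlled by the classification (3.1)--(3.3), which applies only where $g_\e=g$. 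The concluding alternative in your radial case (``one checks this pins $\dot c(0)$ down enough \dots or else forces periodicity'') is not an argument. Replace the whole second paragraph by the two-sided application of Lemma \ref{L3.3} described above.
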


\begin{proof}
Otherwise we can use Lemma \ref{L3.3} to obtain $s_-<s<s_+$ such that $r\circ c(s_-)=r\circ c(s_+)=R_0$. So Lemma \ref{L3.4} implies $[s_-,s_+]\subset J_c$, in contradiction 
to $s\notin J_c$.
\end{proof}

Next we use the fact that $g_\e$ and $g$ coincide outside $(R_0+\e_0,R_1-\e)\times S^1$ to prove:

\begin{lemma}\label{L3.6}
There exists $0<\e_2<\e_0$ such that there is no injective $g_\e$-geodesic $c\colon [0,\infty)\to [0,R_1]\times S^1$ if $\e\in(-\e_2,\e_2)$.
\end{lemma}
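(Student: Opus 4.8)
\textbf{Proof proposal for Lemma \ref{L3.6}.}

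The plan is to argue by contradiction and extract a limiting $g$-geodesic that has a forbidden shape. Suppose no such $\e_2$ exists; then there are $\e_i\to 0$ and injective $g_{\e_i}$-geodesics $c_i\colon [0,\infty)\to[0,R_1]\times S^1$. Since $c_i(\R)\subseteq [0,R_1]\times S^1$ and the circle $\{R_1\}\times S^1$ is covered by the set where all metrics agree, I first want to understand how $c_i$ behaves near the "outer" circle. By Lemma \ref{L3.5} (applicable once $i$ is large so that $|\e_i|<\e_1$), whenever $r\circ c_i(s)\ge R_1$ we have $\dot c_i(s)=\pm\partial_r|_{c_i(s)}$; but an orbit with $r\circ c_i(s_0)=R_1$ and $\dot c_i(s_0)=\partial_r$ would be a radial $g$-geodesic leaving $[0,R_1]\times S^1$, contradicting the target space, and $\dot c_i(s_0)=-\partial_r$ forces $c_i$ to be the radial geodesic through that point, which is not injective on $[0,\infty)$ (it passes through $r=0$ and continues). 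Hence in fact $r\circ c_i(s)<R_1$ for all $s$, so each $c_i$ is trapped in $[0,R_1)\times S^1$, and moreover (by the same Lemma together with Lemma \ref{L3.3}) it never even reaches $R_1$.

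Next I would use Lemma \ref{L3.4}: the set $J_{c_i}=\{s\mid r\circ c_i(s)\le R_0\}$, if nonempty, is an interval of length exactly $\pi$; since $c_i$ is defined on $[0,\infty)$ and confined to $[0,R_1)\times S^1$, after that interval of length $\pi$ the curve must leave $J_{c_i}$ and stay in $(R_0,R_1)\times S^1$ for all large $s$ (it cannot come back, or $J_{c_i}$ would be disconnected or longer). On the region $(R_0,R_1)\times S^1$ the metric $g_{\e_i}$ differs from $g$ only on $(R_0+\e_0,R_1-\e_i)\times(0,\pi)$, which is a compact piece; but the point is that on the complementary annular strips the curve is a genuine $g$-geodesic, and by the classification (3.1)--(3.3) any $g$-geodesic meeting $(R_0,R_1)\times S^1$ either is radial (hence hits $r=R_0$ with derivative nonzero and then descends, or exits through $R_1$) or has $0<|I(\dot c)|<1$ and oscillates between $r_-<R_0$ and $r_+>R_1$ — so it too must exit the strip on the $R_1$ side. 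In either case a $g$-geodesic cannot stay in $(R_0,R_1)\times S^1$ for all $s\ge s_0$; one extracts, via Proposition \ref{P2.2}(ii), a $C^\infty$-convergent subsequence of the $c_i$ (suitably reparameterized so the relevant parameter stays bounded) to a $g$-geodesic $c$ with $r\circ c(s)\in[R_0,R_1]$ for all large $s$, which is impossible.

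The main obstacle is the bookkeeping in the limiting argument: one must choose the base parameter along $c_i$ so that the limit geodesic $c$ inherits the property "$r\circ c\in[R_0,R_1]$ on a half-line" (or at least on arbitrarily long intervals, after which a compactness/exhaustion argument over the geodesic flow still produces a contradiction with (3.1)--(3.3)), while simultaneously ensuring the $g_{\e_i}$-geodesic equation converges to the $g$-geodesic equation — this is exactly what Proposition \ref{P2.2}(ii) is for. A secondary subtlety is ruling out the possibility that $J_{c_i}$ is empty: if $c_i$ never enters the hemisphere then it lives in $(R_0,R_1)\times S^1$ from the start, and the same $g$-geodesic classification argument (this time the limit geodesic has $r\circ c\in[R_0,R_1]$ on all of $[0,\infty)$) gives the contradiction directly. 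Once these points are handled, the lemma follows, and $\e_2$ can be taken as any positive number below the resulting threshold (and below $\e_1$).
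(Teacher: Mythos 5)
There is a genuine gap at the final step. Your limiting argument ends with: extract a limit $g$-geodesic $c$ with $r\circ c(s)\in[R_0,R_1]$ for all (large) $s$, ``which is impossible'' by (3.1)--(3.3). But it is not impossible: case (3.2), which your case analysis omits, is exactly such a geodesic --- the circle $\{R_0\}\times S^1$ with $|I(\dot c)|=1$ stays in $[R_0,R_1]\times S^1$ forever. (You cannot force the limit into the \emph{open} strip $(R_0,R_1)\times S^1$; a limit of curves in the open strip may touch the boundary, and indeed the classification shows the limit \emph{must} be the circle $\{R_0\}\times S^1$, since radial geodesics escape and geodesics with $0<|I|<1$ dip below $R_0$ infinitely often.) A related slip: for $0<|I(\dot c)|<1$ you assert the geodesic oscillates between $r_-<R_0$ and $r_+>R_1$ and hence ``exits on the $R_1$ side''; in fact $r_+>R_0$ but need not exceed $R_1$ (for $|I|$ near $1$ both $r_\pm$ are near $R_0$), so the correct obstruction is that such a geodesic leaves the strip through $\{R_0\}\times S^1$, not through $\{R_1\}\times S^1$.

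The missing idea is the second half of the paper's proof. After identifying the limit of $\tilde c_i(s)=c_i(i+s)$ as the circle $\{R_0\}\times S^1$, one uses that $\dot{\tilde c}_i(0)\to\pm\partial_\theta|_{c(0)}$ and that $g_{\e_i}$ coincides with $g$ on the whole neighborhood $[0,R_0+\e_0]\times S^1$ of that circle (Proposition \ref{P2.2}(iii)). For large $i$ the $g$-geodesic with initial data $\dot{\tilde c}_i(0)$ has Clairaut integral close to $1$, hence is a great circle of the round metric confined to $[0,R_0+\e_0]\times S^1$; it is therefore also a $g_{\e_i}$-geodesic, so by uniqueness it equals $\tilde c_i$. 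Being periodic, it contradicts the injectivity of $c_i$. Without this step your argument does not close. (Your opening paragraph ruling out $r\circ c_i\ge R_1$ is also both unnecessary and shaky --- for $\e\neq 0$ the inward geodesic from $(R_1,\theta)$ is deflected and generally does not pass through $r=0$ --- but that detour can simply be dropped, as the paper's use of Lemma \ref{L3.4} plus a parameter translation already reduces to $c_i([0,\infty))\subset[R_0,R_1]\times S^1$.)
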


\begin{proof}
Otherwise there exist sequences $\e_i\to 0$ and injective $g_{\e_i}$-geodesics $c_i\colon [0,\infty)\to [0,R_1]\times S^1$. Using Lemma \ref{L3.4} and translation of the parameter we may 
assume that $c_i([0,\infty))\subset [R_0,R_1]\times S^1$ for all $i\in \N$. Let $c$ be a limit of the $g_{\e_i}$-geodesics $\tilde{c}_i(s)=c_i(i+s)$. Then $c$ is a $g$-geodesic and $c(\R)
\subset [R_0,R_1]\times S^1$. From (3.1)-(3.3) we infer that $c$ parameterizes the circle $\{R_0\}\times S^1$, in particular $\dot{c}(s)=\pm \partial_\theta|_{c(s)}$ for all $s\in \R$. 
Since $\lim_{i\to \infty} \dot{\tilde{c}}_i(0)=\dot{c}(0)=\pm\partial_\theta|_{c(0)}$, and since $g_{\e_i}$ and $g$ coincide on $[0,R_0+\e_0]\times S^1$, we see that, for almost all $i\in \N$,
the $\tilde{c}_i$ are $g$-geodesics parameterizing a great circle of the spherical metric $g|_{[0,R_1]\times S^1}$. In particular, these $\tilde{c}_i$ are periodic, in contradiction to the assumed
injectivity of the $c_i$.
\end{proof}

\begin{lemma}\label{L3.7}
There exists $\e_3\in (0,\e_0)$ such that the following holds for all $\e\in (-\e_3,\e_3)$ and every $g_\e$-geodesic $c$. If $c(0)=(R_0,\theta)$  and $|\sphericalangle (\dot{c}(0),\partial_r|_{c(0)})|
<\e_3\max|\phi|$, then there exists $l>0$ such that $c([0,l])\subset [R_0,R_1]\times S^1$ and $c(l)\in \{R_1\}\times S^1$. If, moreover, $\dot{c}(0)\neq \dot{c}_{\e,\theta}(0)$ then 
$\dot{c}(l)\neq \partial_r|_{c(l)}$.
\end{lemma}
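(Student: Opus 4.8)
The plan is to prove the lemma in two stages. In the first stage I show that a $g_\e$-geodesic $c$ as in the hypothesis crosses the annulus $[R_0,R_1]\times S^1$ strictly monotonically in the radial coordinate, hence reaches $\{R_1\}\times S^1$ while staying inside the annulus; in the second stage I rule out orthogonal arrival by a uniqueness argument based on the foliation of $[R_0,R_1]\times S^1$ by the geodesics $c_{\e,\theta}$ provided by Proposition \ref{P2.2}(iv).

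\emph{Stage 1 (reaching $\{R_1\}\times S^1$).} At $\e=0$ the $g$-geodesic starting at $(R_0,\theta)$ with initial velocity $\partial_r|_{(R_0,\theta)}$ is the radial geodesic $s\mapsto(R_0+s,\theta)$; it satisfies $(r\circ c)^{\boldsymbol{\cdot}}\equiv1$, reaches $\{R_1\}\times S^1$ at arclength $\tfrac{\pi}{4}=R_1-R_0$, and crosses $\{R_1\}\times S^1$ transversally. The geodesic flow of $g_\e$ depends continuously --- in fact smoothly, by Proposition \ref{P2.2}(ii) --- on $\e$ and on the initial data, and the relevant set of initial data here, namely base points $(R_0,\theta)$ with $\theta\in S^1$ together with unit velocities making a sufficiently small angle with $\partial_r$, is compact. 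Hence there are $\e_3\in(0,\e_0)$ and $\delta>0$, which we may arrange so that in addition $\e_3\max|\phi|\le\delta$, such that for every $|\e|<\e_3$, every $\theta\in S^1$ and every unit vector $v$ at $(R_0,\theta)$ with $|\sphericalangle(v,\partial_r)|<\delta$, the $g_\e$-geodesic $c$ with $c(0)=(R_0,\theta)$, $\dot c(0)=v$ satisfies $(r\circ c)^{\boldsymbol{\cdot}}\ge\tfrac12$ as long as $r\circ c\in[R_0,R_1]$, so that (the geodesic remaining defined because it stays in the compact set $[R_0,R_1]\times S^1$) it reaches the value $r=R_1$ at a first arclength $l>0$ which is close to $\tfrac{\pi}{4}$. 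Since $r\circ c$ is then strictly increasing on $[0,l]$ with $r\circ c(0)=R_0$, we get $c([0,l])\subset[R_0,R_1]\times S^1$. The hypothesis $|\sphericalangle(\dot c(0),\partial_r|_{c(0)})|<\e_3\max|\phi|\le\delta$ places our geodesic in this class (note $\max|\phi|>0$ by \eqref{E2.2}), which proves the first assertion. This conclusion also applies to $c_{\e,\theta}$ itself, with $l=l_\e(\theta)$, consistently with Proposition \ref{P2.2}(iv).

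\emph{Stage 2 (non-orthogonal arrival).} Suppose, for contradiction, that $\dot c(l)=\partial_r|_{c(l)}$, and write $c(l)=(R_1,\theta')$. Consider the reversed geodesic $\bar c(s):=c(l-s)$; it is a $g_\e$-geodesic with $\bar c(0)=(R_1,\theta')$ and $\dot{\bar c}(0)=-\partial_r|_{(R_1,\theta')}$. By Proposition \ref{P2.2}(iv) the curve $s\mapsto c_{\e,\theta'}(l_\e(\theta')-s)$ is a $g_\e$-geodesic with the same initial point and velocity, so uniqueness of geodesics yields $c(l-s)=c_{\e,\theta'}(l_\e(\theta')-s)$ for all $s$ in the common interval $[0,\min\{l,l_\e(\theta')\}]$. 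Along either side the radial coordinate is a strictly decreasing function of $s$ --- by Stage 1 for $c$, and by $(r\circ c_{\e,\theta'})^{\boldsymbol{\cdot}}>0$ for $c_{\e,\theta'}$ --- which attains the value $R_0$ exactly at $s=l$, respectively at $s=l_\e(\theta')$. If $l\neq l_\e(\theta')$ the identity at the smaller of these two arclengths puts $R_0$ on one side and a value strictly larger than $R_0$ on the other, a contradiction; hence $l=l_\e(\theta')$. Evaluating the identity at $s=l$ then gives $(R_0,\theta)=c(0)=c_{\e,\theta'}(0)=(R_0,\theta')$, so $\theta'=\theta$, and therefore $c(u)=c_{\e,\theta}(u)$ for all $u\in[0,l]$; in particular $\dot c(0)=\dot c_{\e,\theta}(0)$, contradicting the hypothesis. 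Thus $\dot c(l)\neq\partial_r|_{c(l)}$.

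The main obstacle is the uniformity in Stage 1: the estimate "$(r\circ c)^{\boldsymbol{\cdot}}$ stays positive until $r=R_1$" must hold with constants $\e_3,\delta$ independent of $\theta\in S^1$. This is a routine consequence of continuous dependence of solutions of the geodesic equation on parameters together with compactness of $S^1$ and of the (bounded) annulus $[R_0,R_1]\times S^1$, but it is the only point where quantitative care is required; Stage 2 is formal and uses only uniqueness of geodesics and the properties recorded in Proposition \ref{P2.2}(iv).
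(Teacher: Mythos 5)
Your proof is correct and follows essentially the same route as the paper: the first assertion via continuous dependence of the $g_\e$-geodesic flow on $\e$ and the initial data (plus compactness for uniformity), and the second via the fact that, by Proposition \ref{P2.2}(iv) and uniqueness of geodesics, $c_{\e,\theta}$ is the only $g_\e$-geodesic in $[R_0,R_1]\times S^1$ starting at $(R_0,\theta)$ and ending orthogonally on $\{R_1\}\times S^1$. Your Stage 2 merely spells out, via reversal and the monotonicity of $r$, the uniqueness claim that the paper states in one sentence.
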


Note that obviously $\dot{c}(l)\neq -\partial_r|_{c(l)}$.

\begin{proof}
The first part of the claim follows from the fact that $\lim_{\e\to 0} g_\e=g$ in the $C^\infty$-topology, see Proposition \ref{P2.2}(ii). To prove the second part note that Proposition \ref{P2.2}(iv)
implies that $c_{\e,\theta}$ is the only $g_\e$-geodesic in $[R_0,R_1]\times S^1$ that starts at $(R_0,\theta)$ and ends orthogonally on $\{R_1\}\times S^1$.
\end{proof}

\begin{proof}[Proof of Proposition \ref{P1.6}]
We redefine $\e_0$ as the minimum of $\e_1,\e_2,\e_3$ from Lemma \ref{L3.3}, \ref{L3.6}, and \ref{L3.7}. We consider an injective $g_\e$-geodesic $c$ assuming $0<|\e|<\e_0$. 
We will show that $c$ intersects $\{R_1\}\times S^1$ orthogonally at one of the points $(R_1,0)$, $(R_1,\frac{\pi}{2})$, $(R_1,\pi)$, $(R_1,\frac{3\pi}{2})$. Then our claim will follow
from Lemma \ref{L3.1}. 

Combining Lemma \ref{L3.5} and \ref{L3.6} we conclude that the ends of $c$ lie on radial geodesics. In particular, we find $s_-\in \R$ and $\theta_-\in [0,2\pi)$ so that 
$c(s)=(R_1+(s_- -s),\theta_-)$ for all $s\le s_-$. We will complete the proof by showing that $\theta_-\in \{0,\frac{\pi}{2},\pi,\frac{3\pi}{2}\}$. 

Case (a): $\theta_-\in [\pi,2\pi)$. Since $\phi_\e(\theta_-)=0$ we infer from Proposition \ref{P2.2}(iii) and (iv) that, for $s\ge s_-$, $c(s)$ continues on a radial geodesic until it intersects
$\{R_0\}\times S^1$ for the second time at the parameter value $s_+=s_-+(R_1-R_0)+\pi$ and at the point $c(s_+)=(R_0,\theta_+)$, where $\theta_+=\theta_- -\pi\in [0,\pi)$. 
Note that $\dot{c}(s_+)=\partial_r|_{c(s_+)}$. We will now show that the assumption $\theta_+\notin \{0,\frac{\pi}{2}\}$ leads to a contradiction, and thereby complete the proof in case (a). 
If $\theta_+\in [0,\pi)\setminus \{0,\frac{\pi}{2}\}$ then $\phi_\e(\theta_+)\neq 0$, see \eqref{E2.3}. Hence we have $\dot{c}(s_+)=\partial_r|_{c(s_+)}\neq \dot{c}_{\e,\theta_+}(0)$. 
So Lemma \ref{L3.7} provides $l>0$ such that $r\circ c(s_+ +l)=R_1$ and $\dot{c}(s_++l)\notin \{\partial_r|_{c(s_+ +l)}, -\partial_r|_{c(s_+ +l)}\}$, in contradiction to Lemma \ref{L3.5}. 

Case (b): $\theta_- \in [0,\pi)$. We will assume that $\theta_-\notin \{0,\frac{\pi}{2}\}$ and show that this leads to a contradiction. Since $\dot{c}(s_-)=-\partial_r|_{(R_1,\theta_-)}$ 
we can use Proposition \ref{P2.2}(iv) to conclude that $c(s_- + l_\e(\theta_-))=(R_0,\theta_-)$ and 
\[
\dot{c}(s_- + l_\e(\theta_-))=-\dot{c}_{\e,\theta_-}(0)=-\cos\phi_\e(\theta_-)\partial_r|_{(R_0,\theta_-)}-\sin \phi_\e(\theta_-)\partial_\theta|_{(R_0,\theta_-)},
\]
where $0<\cos \phi_\e(\theta_-)<1$, cf. \eqref{E2.3}. We set $s_+=s_- +l_\e(\theta_-)+\pi$ and $\theta_+=\theta_-+\pi$. Since $c|_{[s_-+l_\e(\theta_-),s_-+l_\e(\theta_-)+\pi]}$ parameterizes
half of a great circle in $[0,R_0]\times S^1$ we see that $c(s_+)=(R_0,\theta_+)$ and 
\[
\dot{c}(s_+)=\cos\phi_\e(\theta_-)\partial_r|_{c(s_+)}-\sin \phi_\e(\theta_-)\partial_\theta|_{c(s_+)}\neq \partial_r|_{c(s_+)}.
\]
Since $\theta_+\in [\pi,2\pi)$ we have $\phi_\e(\theta_+)=0$, hence 
\[
\dot{c}(s_+)\neq \partial_r|_{c(s_+)}=\dot{c}_{\e,\theta_+}(0).
\]
Since $|\sphericalangle(\dot{c}(s_+),\partial_r|_{c(s_+)})|=|\phi_\e(\theta_-)|<\e_3\max |\phi|$, we can use Lemma \ref{L3.7} to find $l>0$ such that $r\circ c(s_++l)=R_1$ and 
$\dot{c}(s_++l)\notin \{\partial_r|_{c(s_++l)}, -\partial_r|_{c(s_++l)}\}$, in contradiction to Lemma \ref{L3.5}.
\end{proof}

\section{Historical remarks and open problems}

For noncompact, complete Riemannian surfaces that are not homeomorphic to the plane, the cylinder, or the M\"obius band, strong results concerning existence and quantity of 
bounded, oscillating, or proper geodesics have been given in \cite{Hadamard1898} and \cite{Woj82}. In the case of the cylinder there is the following open problem that is 
analogous to Conjecture \ref{C1.5}, but maybe a little simpler:

\begin{conjecture}
On every complete Riemannian surface $S$ homeomorphic to the cylinder $S^1\times \mathbb{R}$ there exist two disjoint embedded geodesic lines.
\end{conjecture}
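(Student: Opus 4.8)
The plan is to exploit the fact that a cylinder $S=S^1\times\R$ has \emph{two} ends, $E_+$ and $E_-$; this is presumably what makes the cylinder easier than the plane, since the two ends already produce one geodesic line ``for free'', and the remaining task is to find a second one disjoint from it. I would proceed in three stages.

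\emph{Stage 1: one straight line joining the two ends.} Fix a circle $\Sigma\subset S$ separating $E_+$ from $E_-$, a basepoint $x_0$, and sequences $p_n\to E_+$, $q_n\to E_-$. Let $\sigma_n$ be minimizing geodesics from $p_n$ to $q_n$, reparameterized so that $\sigma_n(0)$ realizes $d(x_0,\sigma_n)$. Since every curve from $E_+$ to $E_-$ meets $\Sigma$, we get $d(x_0,\sigma_n)\le d(x_0,\Sigma)$, so the points $\sigma_n(0)$ lie in a fixed compact set; and since $d(\Sigma,p_n)$, $d(\Sigma,q_n)\to\infty$, the domains of the $\sigma_n$ exhaust $\R$. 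By completeness and a compactness argument a subsequence converges to a geodesic $L_0\colon\R\to S$ that minimizes between any two of its points. Such an $L_0$ is injective, and it is proper (if $L_0(t_k)$ stayed in a compact set along some $t_k\to\pm\infty$, then $d(L_0(0),L_0(t_k))=|t_k|$ would be bounded), hence $L_0(\R)$ is an embedded geodesic line, joining $E_-$ to $E_+$. For the plane the analogous statement is the content of \cite{BanCom81}; here the presence of two ends shortens the argument.

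\emph{Stage 2: a second geodesic line, disjoint from $L_0$.} Because $L_0$ joins the two ends, $S\setminus L_0$ is connected and simply connected, and its metric completion $\widehat S$ with respect to the induced length metric is a complete smooth Riemannian surface with totally geodesic boundary, $\widehat S\cong\R\times[0,1]$, whose boundary consists of two isometric copies $L_0',L_0''$ of $L_0$. Now repeat the Stage 1 construction inside $\widehat S$, using the two ends of the strip, to obtain a geodesic $c\colon\R\to\widehat S$ minimizing between its points. A minimizing geodesic of a surface with totally geodesic boundary cannot meet the boundary transversally (it could not be continued past it) and cannot be tangent to it without coinciding with it (uniqueness of geodesics holds for the $C^2$ metric obtained by doubling $\widehat S$ across $\partial\widehat S$); therefore either $c\subset\inte\widehat S$, in which case $c$ projects to an embedded geodesic line of $S$ disjoint from $L_0$ and we are done, or $c$ coincides with $L_0'$ or $L_0''$, i.e.\ $c$ is merely $L_0$ re-found.

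\emph{Stage 3, the obstacle: excluding the case $c=L_0$.} What remains is the possibility that every minimizing connection between the two ends of $S$ coincides with $L_0$ or crosses it. Here I would split on the existence of a simple closed geodesic $\gamma$ (for instance the limit of curve shortening applied to a noncontractible circle, when that flow converges rather than escaping to an end): such a $\gamma$ cuts $S$ into two half-cylinders $S_\pm$ with compact geodesic boundary, and it then suffices to produce an interior geodesic line in each $S_\pm$, since these lie in the disjoint open sets $S_+$ and $S_-$. If $S$ has no simple closed geodesic, the ``waist'' degenerates and one should instead control the geometry of the ends, presumably after a dichotomy on the total curvature (recall $\int_S K\,dA\le 0$ by Cohn--Vossen): the finite total curvature case via the asymptotic structure of the ends, the infinite case separately. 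Either branch reduces to the same point --- finding an interior geodesic line in a complete strip $\R\times[0,1]$, or in a half-open cylinder, with geodesic boundary --- and I expect \emph{this} to be the real difficulty: excluding the scenario in which every efficient connection collapses onto the boundary. A pure limiting argument does not obviously suffice here, and it may be necessary to run a minimax over the space of geodesic lines, or to analyze the Busemann functions of the two ends and their horoballs, in order to force a line into the interior.
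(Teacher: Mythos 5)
This statement is a \emph{conjecture} which the paper explicitly leaves open (``In the case of the cylinder there is the following open problem\dots''), so there is no proof in the paper to compare against; any complete argument you gave would be new mathematics. Your Stage 1 is correct and is precisely the observation the authors make after stating the conjecture: since the cylinder has two ends, a standard limit of minimizing segments $\sigma_n$ from $p_n\to E_+$ to $q_n\to E_-$, anchored near a separating circle $\Sigma$, yields a straight line $L_0$ joining the ends, and a straight line is automatically an embedded geodesic line. This part is sound.

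The gap is exactly where you say it is, and it is not closable by the tools you list without substantial new input. In Stage 2, cutting along $L_0$ does give a strip $\widehat S\cong\R\times[0,1]$ with totally geodesic boundary, and a minimizing line in $\widehat S$ either lies in $\inte\widehat S$ or is a boundary component (note, though, that the double of $\widehat S$ across its boundary is in general only finitely differentiable, not $C^\infty$; this is harmless for geodesic uniqueness but should not be asserted as a smooth double). The problem is that the minimizing construction in $\widehat S$ can, and in general will, return a boundary line: there exist cylinders whose only straight lines all cross or coincide with $L_0$, so no purely minimizing argument can produce the second line. Your Stage 3 dichotomies do not resolve this: in the branch with a simple closed geodesic $\gamma$, each half-cylinder $S_\pm$ has a single end, so the required interior line must have \emph{both} ends escaping through that one end --- this is essentially the full difficulty of the planar Conjecture \ref{C1.5} transplanted to a half-cylinder with geodesic boundary, not an easier subcase; and in the branch without a simple closed geodesic the curve-shortening limit escapes to an end and you are left with no structure beyond what you started with. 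The authors' own suggestion is that the missing second line should be of \emph{min-max} type in the complement of $L_0$, i.e.\ a saddle-point construction on a noncompact space of arcs or lines; making such a min-max scheme work (in particular, ruling out that the min-max value is realized only ``at infinity'' or on the boundary $L_0$) is the open problem. So the proposal correctly identifies the architecture of a possible proof but does not prove the statement.
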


Note that there is always a straight line connecting the ends of $S$, and one would expect the existence of a min-max embedded geodesic line in its complement. 

The existence of the total curvature $\int_M K\,dA$ of a complete Riemannian plane $M$ is a strong condition that helps to control the behaviour of geodesics, cf. \cite{ShShTa03}. 
From S. Cohn-Vossen's famous result \cite[Satz 6]{Co-Vo35}, one knows that $\int_M K\, dA\in [-\infty,2\pi]$ if $\int_M K\, dA$ exists as an extended real number. Note that 
$\int_M K\, dA=2\pi$ for the surfaces $(\R^2,g_\e)$ considered in the previous sections. 

\begin{thm}\label{T2}
Let $M$ be a complete Riemannian plane without simple closed geodesics.
\begin{itemize}
\item[(a)] Then there exists an injective geodesic through every point of $M$.
\item[(b)] If, additionally, $\int_M K\, dA$ exists, then there is an embedded geodesic line through every point of $M$.
\end{itemize}
\end{thm}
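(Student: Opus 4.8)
The plan is to prove both parts by contradiction, the common engine being the Gauss--Bonnet theorem applied to embedded disks whose boundary is a geodesic loop with one corner. Fix $p\in M$ and identify $M$ with $\R^2$. The reusable observation is this: if a geodesic $c\colon\R\to M$ has a self-intersection, then selecting the first one along a finite subarc gives parameters $a<b$ with $c(a)=c(b)$ and $c|_{(a,b)}$ injective, so that $\gamma:=c|_{[a,b]}$ is a simple closed curve, smooth except for a single corner at $q:=c(a)=c(b)$ with exterior angle $\alpha$. One has $\dot c(a)\ne-\dot c(b)$ (equality would force $c$ to retrace itself by uniqueness of geodesics, contradicting injectivity of $c|_{(a,b)}$), so $|\alpha|<\pi$; and $\alpha\ne0$, since $\alpha=0$ would make $\gamma$ a simple closed geodesic. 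Hence, by the Jordan curve theorem, $\gamma$ bounds a closed disk $D$ with $\int_DK\,dA=2\pi-\alpha\in(\pi,3\pi)$ by Gauss--Bonnet. In particular $\overline D$ contains no simple closed geodesic and carries total curvature exceeding $\pi$.

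For part (a), suppose for contradiction that no geodesic through $p$ is injective. Parametrizing unit directions by $v\in S_pM\cong S^1$ and writing $c_v$ for the geodesic with $c_v(0)=p$, $\dot c_v(0)=v$, set $T(v):=\inf\{t>0:\ c_v|_{[-t,t]}\text{ is not injective}\}$; as geodesics are immersions and $c_v$ is non-injective, $0<T(v)<\infty$, and applying the observation to $c_v|_{[-T(v),T(v)]}$ yields a first self-intersection loop $\gamma_v\subset c_v(\R)$ bounding a disk $D_v$ with $\int_{D_v}K\,dA>\pi$. Thus the hypothesis produces an $S^1$-parametrized family of curvature-heavy special disks, each emanating from the common point $p$. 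The decisive step is to manufacture from this family a simple closed geodesic. My plan is to select an innermost loop --- one bounding a least-area disk among all special disks obtained this way --- and to deform it by Birkhoff curve-shortening: since a loop bounding a disk of total curvature $>\pi$ cannot be contracted to a point, the shortening converges to a closed geodesic; a further argument --- ruling out self-intersections of the resulting geodesic, for which the observation above and the base point $p$ are again used --- upgrades it to a simple closed geodesic, contradicting the hypothesis. The role of the common base point $p$ is to confine the competing loops so that the selection does not run off to infinity.

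For part (b), by part (a) there is an injective geodesic through $p$, so it suffices to show that, when $\int_MK\,dA$ exists, every injective geodesic $c$ of $M$ is proper. By Cohn--Vossen's inequality $\int_MK\,dA\le2\pi$, and for each $\eta>0$ there is a compact $C\subset M$ with $\int_{M\setminus C}|K|\,dA<\eta$ and $M\setminus C\cong S^1\times[0,\infty)$. If $c$ were oscillating it would make infinitely many excursions out of $C$ into the almost-flat region; Gauss--Bonnet bounds the intrinsic turning of $\dot c$ on each excursion by $\eta$, so for small $\eta$ any excursion that leaves $C$ and returns must wind essentially once around the end, and the angular defect available near the end --- controlled by $2\pi-\int_MK\,dA\ge0$ together with a bounded collar contribution --- cannot accommodate infinitely many such windings of an injective curve. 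If $c$ were bounded, a minimal subset of the $\omega$-limit set of $\dot c$, supported over the compact set $\overline{c(\R)}$, is either a closed geodesic --- whose innermost self-intersection loop, by the observation of the first paragraph, produces a simple closed geodesic --- or a nontrivial recurrent set whose near-returns of $c$ again produce, via that observation and a shortening argument, a simple closed geodesic. In either case the hypothesis is contradicted; hence $c$ is proper and $c(\R)$ is an embedded geodesic line through $p$.

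The main obstacle is the extraction step in part (a): converting the $S^1$-family of curvature-heavy disks into a bona fide smooth simple closed geodesic. The delicate issues are the compactness underlying the choice of a least-area loop --- this is where one must genuinely exploit that all loops share the base point $p$ --- and the control of the curve-shortening flow, whose degeneration is precisely ruled out by the persistence of the enclosed total curvature $>\pi$, together with the need to exclude self-intersections of the limiting geodesic. In part (b) the analogous difficulty, the analysis of bounded injective geodesics through their limit sets, is real but is tamed by the finiteness of $\int_MK\,dA$, which makes $M$ asymptotically conical and hence well-behaved at infinity.
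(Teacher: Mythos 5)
First, a point of reference: the paper does not actually prove Theorem \ref{T2}. It is a survey statement; part (a) is attributed to \cite{BanCom81} (Theorem 2, following an idea of Cohn--Vossen \cite{Co-Vo36}) and part (b) to the combination with \cite{BanInv81} (Theorem 3). So your attempt has to be judged on its own merits rather than against an in-paper argument.

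Your ``reusable observation'' is correct and is indeed the classical starting point: a first self-intersection of a geodesic on a plane without simple closed geodesics produces a geodesic loop with a genuine corner (your exclusion of the cusp case $\dot c(b)=-\dot c(a)$ is fine), bounding a disk $D$ with $\int_D K\,dA=2\pi-\alpha>\pi$. The problems begin exactly where you say they do, and they are not merely technical. The mechanism you propose for part (a) --- curve-shorten a single ``innermost'' loop and argue it cannot shrink to a point because it encloses total curvature $>\pi$ --- rests on a false principle. On a simply connected surface every loop is contractible, the enclosed total curvature is neither preserved nor monotone under shortening, and in fact a single geodesic loop enclosing total curvature $>\pi$ does \emph{not} force the existence of any closed geodesic: on the paraboloid of revolution (which has no closed geodesics whatsoever) every non-meridian geodesic passing close to the vertex self-intersects and its first loop encloses total curvature arbitrarily close to $2\pi$. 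By Grayson's theorem, on a plane without simple closed geodesics the shortening of \emph{any} embedded loop converges to a point, so no contradiction can come out of flowing one loop. The contradiction must be extracted from the whole $S^1$-family of loops at once (this is what Cohn--Vossen's argument does, playing the disks $D_v$ for different $v$ against each other and against the bound $2\pi$ on total positive curvature), and that step --- which you yourself flag as ``the decisive step'' and describe only as a ``plan'' --- is absent. The auxiliary compactness you invoke (a least-area disk among the $D_v$) is also unsubstantiated, since the first self-intersection times $T(v)$ need not be uniformly bounded.

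Part (b) has a parallel gap. In the bounded case you write that a closed geodesic in the $\omega$-limit set yields, via the observation, a simple closed geodesic ``from its innermost self-intersection loop''; but the observation only produces a geodesic loop with a corner, which is not a simple closed geodesic and does not contradict the hypothesis --- this is the same missing reduction as in part (a). The oscillating case (``cannot accommodate infinitely many such windings'') is a plausible heuristic but not an argument; making it precise is essentially the content of \cite[Theorem 3]{BanInv81}. In summary: the correct first step is in place, but both parts are missing their central argument, and the specific route you propose for supplying it in part (a) would fail.
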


Note that the condition $\int_M K^+\, dA<2\pi$ implies both, the non-existence of simple closed geodesics and the existence of $\int_M K\, dA$. 

Statement (a) is proved in \cite[Theorem 2]{BanCom81}, and relies on an idea from  \cite{Co-Vo36}. Statement (b) follows from (a) combined with \cite[Theorem 3]{BanInv81}.
It is an open question if (b) is true without the additional assumption that $\int_M K\, dA$ exists. 

Combining \cite[Theorem 3.5.2 (1)]{ShShTa03}, with \cite[Theorem 3.7.4]{ShShTa03}, one obtains the following strong result on the existence of straight lines that is reminiscent 
of ``visibility results'' in the case of non-positive sectional curvature. 

\begin{thm}
Suppose $M$ is a complete Riemannian plane and $\int_M K\, dA<0$. Then there exist uncountably many straight lines in $M$. 
\end{thm}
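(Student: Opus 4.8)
The plan is to deduce the statement from the two cited theorems of Shiohama--Shioya--Tanaka together with an elementary counting argument on the ideal boundary circle. Throughout, the hypothesis $\int_M K\, dA<0$ is understood to mean that the total curvature $c(M):=\int_M K\, dA$ exists as an element of $[-\infty,2\pi]$ and is negative, so that $c(M)\in[-\infty,0)$. Recall that a complete open surface admitting total curvature carries an ideal boundary $M(\infty)$ --- the set of asymptote classes of rays --- equipped with its natural metric $d_\infty$. By \cite[Theorem 3.5.2 (1)]{ShShTa03}, applied to the plane $M$, the metric space $(M(\infty),d_\infty)$ is a circle of circumference $L(\infty)=2\pi-c(M)$, with the usual convention that this is a circle of infinite length when $c(M)=-\infty$. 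Since $c(M)<0$ we obtain $L(\infty)>2\pi$.

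The first step is purely metric-geometric. In a circle of circumference $L>2\pi$ (the case $L=\infty$ included), for every point $p\in M(\infty)$ the ``far set'' $F_p:=M(\infty)\setminus\overline{B}_{d_\infty}(p,\pi)$ is a nonempty open sub-arc, of length $L-2\pi$. In particular $F_p$ is uncountable. Fixing any $p_0\in M(\infty)$, the family $\{\,\{p_0,q\}\ :\ q\in F_{p_0}\,\}$ is therefore an uncountable collection of pairwise distinct unordered pairs of points of $M(\infty)$, each pair at $d_\infty$-distance greater than $\pi$.

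The second step invokes the visibility result \cite[Theorem 3.7.4]{ShShTa03}: whenever $p\neq q$ in $M(\infty)$ satisfy $d_\infty(p,q)>\pi$, there exists a straight line $\gamma_{p,q}$ in $M$ whose two ends are asymptotic to rays in the classes $p$ and $q$, respectively. Conversely --- and this is part of the same structure theory, cf.\ \cite[Sections 3.5--3.6]{ShShTa03} --- each end of a straight line is a minimizing ray and hence determines a single well-defined point of $M(\infty)$; thus every straight line $\gamma$ carries a well-defined unordered pair $e(\gamma)\subset M(\infty)$ of ideal endpoints, and $e(\gamma_{p_0,q})=\{p_0,q\}$ for each $q\in F_{p_0}$. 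Consequently the map $\gamma\mapsto e(\gamma)$, defined on the set of straight lines of $M$, takes uncountably many distinct values, so $M$ has uncountably many straight lines.

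The geometric content is entirely contained in the two cited theorems of \cite{ShShTa03}; the only genuine points to be careful about are the exact normalization $L(\infty)=2\pi-c(M)$ in \cite[Theorem 3.5.2 (1)]{ShShTa03} and the fact that a minimizing ray converges to a unique point of $M(\infty)$, which is what lets distinct endpoint pairs force distinct straight lines. The case $c(M)=-\infty$ needs no special treatment beyond noting that $F_p$ remains a nonempty (indeed co-finite-length) arc, and the remaining ``counting'' is the trivial observation that a circle of circumference exceeding $2\pi$ has an uncountable supply of pairs of points more than $\pi$ apart.
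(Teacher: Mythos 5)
Your argument is exactly the combination of \cite[Theorem 3.5.2 (1)]{ShShTa03} and \cite[Theorem 3.7.4]{ShShTa03} that the paper indicates (the paper gives no further detail), with the ideal-boundary circle of circumference $2\pi-\int_M K\,dA>2\pi$ supplying uncountably many pairs of ideal points at distance greater than $\pi$, each realized by a straight line. This is correct and is essentially the same approach as the paper's.
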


Note that complete Riemannian planes $M$ with $\int_M K\, dA>0$ do not admit straight lines by \cite[Satz 5]{Co-Vo36}. More recently A. Carlotto and C. De Lellis \cite{Cardelell19}
employed a new min-max method to find embedded geodesic lines. They consider complete Riemannian planes $M$ of non-negative curvature that are asymptotic to a conical surface
in a strong sense. In particular, these planes have total curvature $\int_M K\, dA\in [0,2\pi)$. They find uncountably many embedded geodesic lines of min-max type. In particular these lines 
have Morse index one if the curvature is positive everywhere. Moreover, in contrast to the pure existence result Theorem \ref{T2}, one has very precise control over the asymptotic 
behaviour of these lines. 

Note that complete Riemannian planes $M$ satisfying $0<\int_M K\, dA<2\pi$ are asymptotic to a cone over a circle of length smaller than $2\pi$ with respect to pointed 
Gromov-Hausdorff convergence, see \cite[Theorem 3.7.2]{ShShTa03}. Although this type of convergence is much weaker than the one required in \cite{Cardelell19}, the following 
question seems natural: 

\begin{question}
Do the methods from \cite{Cardelell19} generalize to the case of general complete Riemannian planes $M$ with $0<\int_M K\, dA<2\pi$?
\end{question}

Finally we mention the following question which came up during the work on this paper, see also \cite{su21}.

\begin{question}
Does there exist a Riemannian metric on $S^2$ with an injective geodesic $c\colon [0,\infty)\to S^2$ that is not asymptotic to a simple closed geodesic?
\end{question}

\appendix
\section{Proof of Proposition \ref{P2.2}}

We consider {\it $t$-foliations} of $[R_0,R_1]\times S^1$, i.e. $1$-dimensional foliations of $[R_0,R_1]\times S^1$ that are transverse to all the circles $\{r\}\times S^1$, $R_0\le r\le R_1$. 
The leaves of a $t$-foliation are arcs joining $\{R_0\}\times S^1$ to $\{R_1\}\times S^1$ that are graphs of functions $[R_0,R_1]\to S^1$. The simplest example $\overline{\mathfrak{F}}$
of a $t$-foliations is generated by the vectorfield $\partial_r|_{[R_0,R_1]\times S^1}$and has leaves $[R_0,R_1]\times \{\theta\}$, $\theta\in S^1$. The diffeomorphisms $\Phi_\e$ in 
Proposition \ref{P2.2} will arise from $t$-foliations $\mathfrak{F}_\e$ by appropriately parameterizing the leaves of $\mathfrak{F}_\e$. 

We note that every $t$-foliation $\mathfrak{F}$ of $[R_0,R_1]\times S^1$ determines a map $S_\mathfrak{F}\colon [R_0,R_1]\to \text{Diff}^+(S^1)$, where $S_\mathfrak{F}(r)(\theta)=\theta'$
iff $(r,\theta')$ is the point in which the leaf of $\mathfrak{F}$ through $(R_0,\theta)$ intersects $\{r\}\times S^1$. In particular, we have $S_\mathfrak{F}(R_0)=\text{id}_{S^1}$.
The map $(r,\theta)\in [R_0,R_1]\times S^1\to S_\mathfrak{F}(r)(\theta)$ is $C^\infty$. Conversely, given a map $S\colon [R_0,R_1]\to \text{Diff}^+(S^1)$ such that $S(R_0)=\text{id}_{S^1}$
and such that $(r,\theta)\to S(r)(\theta)$ is $C^\infty$, we obtain a $t$-foliation $\mathfrak{F}$ such that $S_\mathfrak{F}=S$. 

We will use the following elementary fact about $\text{Diff}^+(S^1)$: 

\begin{fact}
The subgroup $\text{Diff}^+_0(S^1)=\{f\in \text{Diff}^+(S^1)|\; f(0)=0\}$ is contractible. 
\end{fact}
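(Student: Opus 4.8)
The plan is to identify $\mathrm{Diff}^+_0(S^1)$ with a convex set of functions and contract it by a straight-line homotopy. Recall $S^1=\R/2\pi\Z$ and let $p\colon\R\to S^1$ be the covering projection. First I would observe that every $f\in\mathrm{Diff}^+_0(S^1)$ has a unique continuous lift $\tilde f\colon\R\to\R$ with $\tilde f(0)=0$; this lift is automatically $C^\infty$ (since $p$ is a local diffeomorphism), satisfies $\tilde f'>0$ everywhere (because $f$ is an orientation-preserving diffeomorphism), and satisfies $\tilde f(x+2\pi)=\tilde f(x)+2\pi$ for all $x$ (the degree-one condition). Conversely, any $F\in C^\infty(\R,\R)$ with $F(0)=0$, $F'>0$ and $F(x+2\pi)=F(x)+2\pi$ descends, via $f(p(x)):=p(F(x))$, to an element of $\mathrm{Diff}^+_0(S^1)$. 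So $f\mapsto\tilde f$ is a bijection from $\mathrm{Diff}^+_0(S^1)$ onto the set $\mathcal{L}$ of all such $F$, and, together with its inverse, it is continuous for the natural $C^\infty$ topologies on both sides.

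Next I would note that $\mathcal{L}$ is a convex subset of the vector space $C^\infty(\R,\R)$: given $F_0,F_1\in\mathcal{L}$ and $t\in[0,1]$, the function $F_t:=(1-t)F_0+tF_1$ satisfies $F_t(0)=0$, $F_t'=(1-t)F_0'+tF_1'>0$, and $F_t(x+2\pi)=F_t(x)+2\pi$, hence $F_t\in\mathcal{L}$. In particular $\mathrm{id}_\R\colon x\mapsto x$ lies in $\mathcal{L}$ --- it is the lift of $\mathrm{id}_{S^1}$ --- so the straight-line homotopy
\[
H\colon\mathcal{L}\times[0,1]\to\mathcal{L},\qquad H(F,t)=(1-t)F+t\,\mathrm{id}_\R
\]
is a contraction of $\mathcal{L}$ onto $\{\mathrm{id}_\R\}$. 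Transporting $H$ through the identification of the first paragraph yields a contraction of $\mathrm{Diff}^+_0(S^1)$ onto $\{\mathrm{id}_{S^1}\}$, which is precisely the assertion of the Fact.

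The only thing that needs (routine) checking is the topological bookkeeping: that $H$ is continuous --- in fact smooth --- from $\mathcal{L}\times[0,1]$ into $\mathcal{L}$ with its $C^\infty$ topology, and hence that the induced homotopy on $\mathrm{Diff}^+_0(S^1)$ is continuous. This is immediate, since $H$ is affine in $F$ and the passage between $f$ and $\tilde f$ involves only the smooth covering map $p$ and its local inverses; no estimates beyond the defining seminorms enter. I do not anticipate any genuine obstacle here: the entire content of the Fact is the elementary observation that the space of basepoint-normalized lifts is convex, and everything else is bookkeeping with the covering map. (Alternatively one could invoke the evaluation fibration $\mathrm{Diff}^+_0(S^1)\to\mathrm{Diff}^+(S^1)\to S^1$ together with $\mathrm{Diff}^+(S^1)\simeq S^1$, but the convexity argument is shorter and self-contained.)
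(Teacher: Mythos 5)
Your proof is correct and follows essentially the same route as the paper: both lift elements of $\mathrm{Diff}^+_0(S^1)$ to normalized maps $\tilde f\colon\R\to\R$ with $\tilde f(0)=0$, $\tilde f'>0$, $\tilde f(x+2\pi)=\tilde f(x)+2\pi$, and contract via the affine combination $(1-\lambda)\tilde f_0+\lambda\tilde f_1$. The paper phrases this as connecting arbitrary $f_0,f_1$ by such affine paths rather than explicitly writing the straight-line contraction onto $\mathrm{id}$, but the content is identical.
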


Indeed, if $f\in \text{Diff}^+_0(S^1)$ then $f$ can be lifted to a $C^\infty$-map $\tilde{f}\colon \R\to \R$ satisfying $\tilde{f}(0)=0$, $\tilde{f}'>0$, and $\tilde{f}(x+2\pi)=\tilde{f}(x)+2\pi$
for all $x\in \R$. Conversely, every such $\tilde{f}$ descends to some $f\in \text{Diff}^+_0(S^1)$. Given $f_0,f_1\in \text{Diff}^+_0(S^1)$ with lifts $\tilde{f}_0,\tilde{f}_1$ and $\lambda
\in [0,1]$, we consider the affine combination $\tilde{f}_\lambda=(1-\lambda)\tilde{f}_0+\lambda \tilde{f}_1$. Then $\tilde{f}_\lambda$ descends to $f_\lambda\in \text{Diff}^+_0(S^1)$ 
and defines a curve $\lambda\colon [0,1]\to f_\lambda \in \text{Diff}^+_0(S^1)$ from $f_0$ to $f_1$ such that $(\lambda,\theta)\to f_\lambda(\theta)$ is $C^\infty$.
We will denote $f_\lambda$ by $(1-\lambda)f_0+\lambda f_1$. The following fact \eqref{EA.1} is obvious:
\begin{equation}\label{EA.1}
\text{\parbox{.85\textwidth}{If $f_0,f_1\in \text{Diff}^+_0(S^1)$ and $f_0(\theta)=f_1(\theta)$ for some $\theta\in S^1$
then $f_\lambda(\theta)=f_0(\theta)$ for all $\lambda \in [0,1]$.}}
\end{equation}

In the following we fix an arbitrary $\delta\in \left(0,\frac{R_1-R_0}{2}\right)$, for definiteness one may take $\delta=\frac{R_1-R_0}{3}=\frac{\pi}{12}$. 

\begin{lemma}\label{LA.1}
Suppose $\mathfrak{F}_0$ and $\mathfrak{F}_1$ are $t$-foliations of $[R_0,R_1]\times S^1$ and suppose $[R_0,R_1]\times \{0\}$ is a leaf of both $\mathfrak{F}_0$ and
$\mathfrak{F}_1$. Then there exists a $t$-foliation $\mathfrak{F}$ of $[R_0,R_1]\times S^1$ that coincides with $\mathfrak{F}_0$ on $[R_0,R_0+\delta]\times S^1$ and 
with $\mathfrak{F}_1$ on $[R_1-\delta,R_1]\times S^1$. If $L$ is a leaf of both $\mathfrak{F}_0$ and $\mathfrak{F}_1$ then $L$ is a leaf of $\mathfrak{F}$.
Finally if $\mathfrak{F}_0$ and $\mathfrak{F}_1$ depend smoothly on a parameter $\e\in \R$ then so does $\mathfrak{F}$.
\end{lemma}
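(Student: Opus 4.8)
The plan is to interpolate between the two foliations using a partition-of-unity–style convex combination in the fiber $\mathrm{Diff}^+_0(S^1)$, exploiting the Fact that this group is contractible together with the explicit linear-in-lifts interpolation $(1-\lambda)f_0+\lambda f_1$ described above. Concretely, I would first translate the data into maps: let $S_0,S_1\colon[R_0,R_1]\to\mathrm{Diff}^+(S^1)$ be the maps $S_{\mathfrak{F}_0},S_{\mathfrak{F}_1}$ associated to the two foliations, so $S_0(R_0)=S_1(R_0)=\mathrm{id}_{S^1}$ and $(r,\theta)\mapsto S_i(r)(\theta)$ is $C^\infty$. The hypothesis that $[R_0,R_1]\times\{0\}$ is a leaf of both foliations says precisely that $S_0(r)(0)=S_1(r)(0)=0$ for all $r$; hence, composing with the rotation that moves $S_i(r)(0)$ to $0$ is unnecessary, and in fact $S_i(r)\in\mathrm{Diff}^+_0(S^1)$ for every $r$. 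This is the point where the ``$0$ is a common leaf'' assumption is used — it lets us stay inside the contractible subgroup $\mathrm{Diff}^+_0(S^1)$ where the affine combination of lifts is well-defined.

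Next I would fix a smooth cutoff function $\chi\colon[R_0,R_1]\to[0,1]$ with $\chi\equiv 0$ on $[R_0,R_0+\delta]$ and $\chi\equiv 1$ on $[R_1-\delta,R_1]$ (possible since $\delta<\frac{R_1-R_0}{2}$, so the two intervals are disjoint), and define
\[
S(r):=(1-\chi(r))\,S_0(r)+\chi(r)\,S_1(r)\in\mathrm{Diff}^+_0(S^1),
\]
using the combination operation on $\mathrm{Diff}^+_0(S^1)$ introduced before \eqref{EA.1}. On the level of lifts this is $\tilde S(r)=(1-\chi(r))\tilde S_0(r)+\chi(r)\tilde S_1(r)$, which is plainly $C^\infty$ jointly in $(r,\theta)$ and, being a convex combination of maps with derivative $>0$ in $\theta$, has positive $\theta$-derivative, so $S(r)$ is genuinely a positive diffeomorphism of $S^1$. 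Also $S(R_0)=\mathrm{id}_{S^1}$ since $\chi(R_0)=0$ and $S_0(R_0)=\mathrm{id}$ (and likewise both factors are the identity at $R_0$ anyway). By the correspondence between such maps $S$ and $t$-foliations recalled in the excerpt, $S$ determines a $t$-foliation $\mathfrak{F}$ with $S_{\mathfrak{F}}=S$.

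It then remains to check the three asserted properties. On $[R_0,R_0+\delta]\times S^1$ we have $\chi\equiv0$, so $S=S_0$ and hence $\mathfrak{F}=\mathfrak{F}_0$ there; on $[R_1-\delta,R_1]\times S^1$ we have $\chi\equiv1$, so $S=S_1$ and $\mathfrak{F}=\mathfrak{F}_1$ there. For a common leaf $L$: such a leaf is the graph of a function $r\mapsto\theta_L(r)$ with $\theta_L(r)=S_0(r)(\theta_L(R_0))=S_1(r)(\theta_L(R_0))$ for all $r$; since $S_0(r)$ and $S_1(r)$ agree at the point $\theta_L(R_0)$, property \eqref{EA.1} gives $S(r)(\theta_L(R_0))=S_0(r)(\theta_L(R_0))=\theta_L(r)$, so $L$ is a leaf of $\mathfrak{F}$ as well. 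Finally, if $\mathfrak{F}_0,\mathfrak{F}_1$ depend smoothly on $\e$, then so do $S_0,S_1$, hence so does $S$ (the construction involves only fixed $\chi$ and the smooth-in-all-variables affine combination of lifts), and therefore so does $\mathfrak{F}$. The only mild subtlety — the ``main obstacle'' such as it is — is making sure the convex-combination operation on $\mathrm{Diff}^+_0(S^1)$ behaves well in families and that one really does land back in $\mathrm{Diff}^+_0(S^1)$ (positivity of the $\theta$-derivative and the periodicity condition $\tilde f(x+2\pi)=\tilde f(x)+2\pi$ for the combined lift); both follow immediately because these conditions are preserved by convex combinations of the lifts, exactly as in the proof of the Fact above.
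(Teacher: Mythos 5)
Your proposal is correct and follows essentially the same route as the paper's proof: pass to the maps $S_0,S_1\colon[R_0,R_1]\to\mathrm{Diff}^+_0(S^1)$ (using the common leaf through $\theta=0$ to land in the contractible subgroup), blend them with a cutoff via the affine combination of lifts, and invoke \eqref{EA.1} to preserve common leaves. Your write-up is just a more detailed version of the same argument, with the routine verifications (positivity of the $\theta$-derivative, periodicity of the combined lift, smoothness in $\e$) spelled out.
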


\begin{proof}
Since $[R_0,R_1]\times \{0\}$ is a leaf of $\mathfrak{F}_0$ and $\mathfrak{F}_1$ the maps $S_0,S_1$ corresponding to $\mathfrak{F}_0,\mathfrak{F}_1$ have range in 
$\text{Diff}^+_0(S^1)$. We choose a function $\lambda\in C^\infty([R_0,R_1],[0,1])$ such that $\lambda|_{[R_0,R_0+\delta]}=0$, $\lambda|_{[R_1-\delta,R_1]}=1$, and
define $S\colon [R_0,R_1]\to \text{Diff}^+_0(S^1)$ by 
\[
S(r)=(1-\lambda(r))S_0(r)+\lambda(r)S_1(r),
\]
and let $\mathfrak{F}$ be the $t$-foliation corresponding to $S$. Using \eqref{EA.1} we see that $\mathfrak{F}$ satisfies our claims.
\end{proof}

The following lemma is a direct consequence of the differential equation characterizing geodesics. 

\begin{lemma}\label{LA.2}
Let $g=(g_{ij})$ be a Riemannian metric on an open subset $U\subset \R^n$. Then the following conditions are equivalent:
\begin{itemize}
\item[(a)] All coordinate lines $x_1\to (x_1,\underline{x})\in U$ are (unit-speed) geodesics.
\item[(b)] $g_{11}=1$ and $\partial_1 g_{1j}=0$ for $1< j\le n$.
\end{itemize}
\end{lemma}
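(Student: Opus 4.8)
The plan is to translate both conditions through the geodesic equation together with the classical formula for the Christoffel symbols $\Gamma^k_{ij}$, after which the equivalence reduces to a short algebraic check. First I would fix a point $p=(a,\underline{x})\in U$ and consider the coordinate line $\gamma(t)=(t,\underline{x})$ for $t$ near $a$; this is defined since $U$ is open, and as $p$ varies these curves cover $U$. In this parameterization $\gamma$ has coordinate velocity $\dot{\gamma}=\partial_1$ and vanishing coordinate acceleration, so the geodesic equation $\ddot{x}^k+\Gamma^k_{ij}\dot{x}^i\dot{x}^j=0$ collapses at $p$ to $\Gamma^k_{11}(p)=0$ for $k=1,\dots,n$. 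Hence ``every coordinate line $x_1\mapsto(x_1,\underline{x})$ is a geodesic'' is equivalent to $\Gamma^k_{11}\equiv 0$ on $U$. Moreover the speed of $\gamma$ equals $g(\dot{\gamma},\dot{\gamma})^{1/2}=g_{11}^{1/2}$, so the qualifier ``unit-speed'' amounts exactly to $g_{11}\equiv 1$ on $U$.

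Next I would substitute the formula $\Gamma^k_{11}=\frac{1}{2}\,g^{kl}\bigl(2\partial_1 g_{1l}-\partial_l g_{11}\bigr)$ (sum over $l$). Since the matrix $(g^{kl})$ is invertible, $\Gamma^k_{11}=0$ for every $k$ is equivalent to $2\partial_1 g_{1l}=\partial_l g_{11}$ for every $l=1,\dots,n$. So condition (a) is equivalent to the conjunction ``$g_{11}\equiv 1$ and $2\partial_1 g_{1l}=\partial_l g_{11}$ for $1\le l\le n$''.

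It then remains to reconcile this conjunction with (b). If (a) holds, then $g_{11}\equiv 1$ forces $\partial_l g_{11}=0$ for every $l$, whence $\partial_1 g_{1l}=0$ for every $l$, and in particular $\partial_1 g_{1j}=0$ for $j>1$, which is (b). Conversely, if (b) holds, then $g_{11}\equiv 1$ again gives $\partial_l g_{11}=0$ for every $l$, while $\partial_1 g_{11}=0$ (as $g_{11}\equiv 1$) together with the hypothesis $\partial_1 g_{1j}=0$ for $j>1$ gives $\partial_1 g_{1l}=0$ for every $l$; hence the identities $2\partial_1 g_{1l}=\partial_l g_{11}$ hold and (a) follows.

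The argument is essentially one substitution, so I do not expect a genuine obstacle; the only steps meriting some care are the passage from ``there is a geodesic coordinate line through each point'' to the pointwise identity $\Gamma^k_{11}\equiv 0$ on $U$ (where openness of $U$ is used so that the coordinate segments remain inside $U$), and the use of invertibility of $(g^{kl})$ to eliminate it from the vanishing condition.
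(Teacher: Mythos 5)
Your proof is correct and is exactly the computation the paper has in mind: the paper offers no written proof, stating only that the lemma ``is a direct consequence of the differential equation characterizing geodesics,'' and your substitution of $\dot\gamma=\partial_1$, $\ddot\gamma=0$ into the geodesic equation, followed by the Christoffel-symbol formula $\Gamma^k_{11}=\tfrac12 g^{kl}(2\partial_1 g_{1l}-\partial_l g_{11})$ and the invertibility of $(g^{kl})$, is precisely that argument carried out in detail. You also correctly isolate the role of the unit-speed requirement in forcing $g_{11}\equiv 1$ rather than merely $\partial_1 g_{11}=0$.
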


\begin{proof}[Proof of Proposition \ref{P2.2}] $ $\\ 
Step 1: Construction of an interpolating $t$-foliation $\mathfrak{F}_\e$. 

For $|\e|<\frac{\pi}{4\max|\phi|}$ and $\theta\in S^1$ we consider the $g$-geodesics $c^g_{\e,\theta}$ with initial vector
\begin{align}\label{EA.2}
\dot{c}^g_{\e,\theta}(0)=cos \phi_\e(\theta) \partial_r|_{(R_0,\theta)}+\sin \phi_\e(\theta) \partial_\theta|_{(R_0,\theta)}.
\end{align}
So $c^g_{\e,\theta}(0)=(R_0,\theta)$ and $(r\circ c^g_{\e,\theta})'(0)=\cos\phi_\e(\theta)>0$. If $\e=0$ then $c^g_{\e,\theta}(s)=(R_0+s,\theta)$. Using this and standard calculus
we find $\e_0\in\left(0,\frac{\pi}{4\max|\phi|}\right)$ such that the following is true for $|\e|< \e_0$: 
There exists $l^g_\e\in C^\infty(S^1,[\frac{\pi}{4},\infty))$ such that the $g$-geodesic arcs $c^g_{\e,\theta}([0,l^g_\e(\theta)])$, $\theta\in S^1$, are the leaves of a $t$-foliation 
$\mathfrak{F}^g_\e$ of $[R_0,R_1]\times S^1$. Note that $\phi_\e(\theta)=0$ implies that $L^g_\e(\theta)=R_1-R_0=\frac{\pi}{4}$ and $c^g_{\e,\theta}([0,\frac{\pi}{4}])=
[R_0,R_1]\times \{\theta\}$. In particular, $\mathfrak{F}^g_0$ is the canonical foliation $\overline{\mathfrak{F}}$. Using Lemma \ref{LA.1} we find a $t$-foliation $\mathfrak{F}_\e$
such that $\mathfrak{F}_\e$ coincides with $\mathfrak{F}^g_\e$ on $[R_0,R_0+\delta]\times S^1$ and with $\overline{\mathfrak{F}}=\mathfrak{F}^g_0$ on $[R_1-\delta,R_1]\times
S^1$, and such that $\mathfrak{F}_\e$ depends smoothly on $\e$. Since $S_{\mathfrak{F}_\e}(R_1)=S_{\overline{\mathfrak{F}}}(R_1)=\text{id}_{S^1}$, the leaf of $\mathfrak{F}_\e$ 
starting at $(R_0,\theta)$ ends at $(R_1,\theta)$. Moreover, we have 
\begin{equation}\label{EA.3}
\text{\parbox{.85\textwidth}{If $\phi_\e(\theta)=0$, then $[R_0,R_1]\times \{\theta\}$ is a leaf of $\mathfrak{F}_\e$.}}
\end{equation}

\noindent Step 2: Construction of the diffeomorphism $\Phi_\e$ from Proposition \ref{P2.2}.

We parametrize the leaves of $\mathfrak{F}_\e$ so as to obtain a diffeomorphism 
\[
\Phi_\e \colon D_\e=\{(s,\theta)|\; \theta\in S^1,\, 0\le s\le l_\e(\theta)\}\to [R_0,R_1]\times S^1
\]
with the following properties \eqref{EA.4}-\eqref{EA.7}:
\begin{equation}\label{EA.4}
\text{\parbox{.85\textwidth}{$\Phi_\e$ is smooth in the variables $(\e,s,\theta)$.}}
\end{equation}
\begin{equation}\label{EA.5}
\text{\parbox{.85\textwidth}{ $\Phi_0(s,\theta)=(R_0+s,\theta)$ for $(s,\theta)\in D_0=\left[0,\frac{\pi}{4}\right]\times S^1$.}}
\end{equation}
\begin{equation}\label{EA.6}
\text{\parbox{.85\textwidth}{$\Phi_\e(s,\theta)=c^g_{\e,\theta}(s)$ for $s\in [0,\delta]$.}}
\end{equation}
\begin{equation}\label{EA.7}
\text{\parbox{.85\textwidth}{$\Phi_\e(s,\theta)=(R_1+s-l_\e(\theta),\theta)$ for $s\in [l_\e(\theta)-\delta,l_\e(\theta)]$.}}
\end{equation}

By \eqref{EA.4} and \eqref{EA.5} the claims made in Proposition \ref{P2.2}(i) and (ii) are satisfied by $\Phi_\e$. Reducing $\e_0>0$, if necessary, we may assume that the following holds for 
$|\e|<\e_0$:
\begin{equation}\label{EA.8}
\Phi_\e([0,\delta]\times S^1)\supset\left[R_0,R_0+\frac{\delta}{2}\right]\times S^1
\end{equation}

Since $\mathfrak{F}_\e$ is a $t$-foliation we have
\begin{equation}\label{EA.9}
\text{\parbox{.85\textwidth}{$\frac{\partial(r\circ\Phi_\e)}{\partial s}(s,\theta)>0$ for all $(s,\theta)\in D_\e$.}}
\end{equation}

If $\theta\in [\pi,2\pi]$ then $\phi_\e(\theta)=0$ and $l_\e(\theta)=\frac{\pi}{4}$, cf. \eqref{E2.5}. Then $[R_0,R_1]\times \{\theta\}$ is a leaf of both $\mathfrak{F}^g_\e$ and 
$\overline{\mathfrak{F}}$, hence of $\mathfrak{F}_\e$. Consequently, we may assume 
\begin{equation}\label{EA.10}
\text{\parbox{.85\textwidth}{ If $(s,\theta)\in \left[0,\frac{\pi}{4}\right]\times [\pi,2\pi]$ then $\Phi_\e(s,\theta)=(R_0+s,\theta)$.}}
\end{equation}

Similarly, since $\phi_\e(\frac{\pi}{2})=0$, we have:
\begin{equation}\label{EA.11}
\Phi_\e\left(\left[0,l_\e\left(\frac{\pi}{2}\right)\right]\times \left\{\frac{\pi}{2}\right\}\right)=[R_0,R_1]\times \left\{\frac{\pi}{2}\right\}
\end{equation}

\noindent Step 3: Definition of the Riemannian metrics $g_\e$.

It will be technically convenient to reverse the direction of the curves $s\to \Phi_\e(s,\theta)$ and to consider $\tilde{\Phi}_\e\colon D_\e\to [R_0,R_1]\times S^1$, 
$\tilde{\Phi}_\e(s,\theta)=\Phi_\e(l_\e(\theta)-s,\theta)$, where $\Phi_\e$ is as in Step 2. Note that \eqref{EA.7} and \eqref{EA.6} transform into 
\begin{equation}\label{EA.12}
\text{\parbox{.85\textwidth}{$\tilde{\Phi}_\e(s,\theta)=(R_1-s,\theta)$ if $(s,\theta)\in [0,\delta]\times S^1$,}}
\end{equation}
and 
\begin{equation}\label{EA.13}
\text{\parbox{.85\textwidth}{ $\tilde{\Phi}_\e(l_\e(\theta)-s,\theta)=c^g_{\e,\theta}(s)$ if $(s,\theta)\in [0,\delta]\times S^1$.}}
\end{equation}

First we will show:
\begin{equation}\label{EA.14}
\text{\parbox{.85\textwidth}{ $g\left.\left(\frac{\partial\tilde{\Phi}_\e}{\partial s},\frac{\partial\tilde{\Phi}_\e}{\partial \theta}\right)\right|_{(s,\theta)}=0$ for all $(s,\theta)\in ([0,\delta]\cup [l_\e(\theta)-\delta,l_\e(\theta)])
\times S^1$.}}
\end{equation}

If $(s,\theta)\in [0,\delta]\times S^1$ then $g\left.\left(\frac{\partial\tilde{\Phi}_\e}{\partial s},\frac{\partial\tilde{\Phi}_\e}{\partial \theta}\right)\right|_{(s,\theta)}=0$ is a direct consequence of \eqref{EA.12}. 
Since $\tilde{\Phi}_\e(l_\e(\theta),\theta)=(R_0,\theta)$ by \eqref{EA.2} and \eqref{EA.13} we have
\[
l_\e'(\theta)\frac{\partial\tilde{\Phi}_\e}{\partial s}(l_\e(\theta),\theta)+\frac{\partial\tilde{\Phi}_\e}{\partial \theta}(l_\e(\theta),\theta)=\partial_\theta|_{(R_0,\theta)}.
\]
Using \eqref{EA.2}, \eqref{EA.13}, and \eqref{E2.4}, we obtain
\begin{align*}
g\left.\left(\frac{\partial\tilde{\Phi}_\e}{\partial s},\frac{\partial\tilde{\Phi}_\e}{\partial \theta}\right)\right|_{(l_\e(\theta),\theta)}&=g(-\dot{c}^g_{\e,\theta}(0),\partial_\theta|_{(R_0,\theta)}
+l_\e'(\theta) \dot{c}^g_{\e,\theta}(0))\\
&=-l_\e'(\theta)-g(\dot{c}^g_{\e,\theta}(0),\partial_\theta|_{(R_0,\theta)})=-l_\e'(\theta)-\sin\phi_\e(\theta)=0.
\end{align*}

Since the parameter lines $s\in [l_\e(\theta)-\delta,l_\e(\theta)]\to \tilde{\Phi}_\e(s,\theta)$ are $g$-geodesics, Lemma \ref{LA.2} and the preceding equation imply that 
$g\left.\left(\frac{\partial\tilde{\Phi}_\e}{\partial s},\frac{\partial\tilde{\Phi}_\e}{\partial \theta}\right)\right|_{(s,\theta)}=0$ for all $(s,\theta)\in [l_\e(\theta)-\delta,l_\e(\theta)]\times S^1$.
This completes the proof of \eqref{EA.14}. 

Now we define $g_\e$ on $[R_0,R_1]\times S^1$ by 
\begin{equation}\label{EA.15}
g_\e\left(\frac{\partial\tilde{\Phi}_\e}{\partial s},\frac{\partial\tilde{\Phi}_\e}{\partial s}\right)=1
\end{equation}
\begin{equation}\label{EA.16}
g_\e\left(\frac{\partial\tilde{\Phi}_\e}{\partial s},\frac{\partial\tilde{\Phi}_\e}{\partial \theta}\right)=0
\end{equation}
\begin{equation}\label{EA.17}
g_\e\left(\frac{\partial\tilde{\Phi}_\e}{\partial \theta},\frac{\partial\tilde{\Phi}_\e}{\partial \theta}\right)=g\left(\frac{\partial\tilde{\Phi}_\e}{\partial \theta},\frac{\partial\tilde{\Phi}_\e}{\partial \theta}\right)
\end{equation}

Finally we will show that, provided $\e_0<\frac{\delta}{2}$, $g_\e$ satisfies all the claims made in Proposition \ref{P2.2}. Since $\tilde{\Phi}_\e$ is smooth in $(\e,s,\theta)$, see \eqref{EA.4},
equations \eqref{EA.15}-\eqref{EA.17} show that $g_\e$ is smooth in $(\e,s,\theta)$. Similarly we obtain $g_0=g$ from \eqref{EA.5}. Equations \eqref{EA.8}, \eqref{EA.10}, and \eqref{EA.12}-\eqref{EA.14}, imply that, for $|\e|<\e_0$, $g_\e$ and $g$
coincide on $([R_0,R_0+\e_0]\cup[R_1-\e_0,R_1])\times S^1\cup [R_0,R_1]\times [\pi,2\pi]$. In particular,  we can extend $g_\e$ to the whole plane by setting $g_\e$ equal to $g$
outside $[R_0,R_1]\times S^1$. Then Proposition \ref{P2.2}(iii) is true. Using Lemma \eqref{LA.2}, and \eqref{EA.15} and \eqref{EA.16}, we conclude that the curves $c_{\e,\theta}\colon [0,l_\e(\theta)]
\to [R_0,R_1]\times S^1$, $c_{\e,\theta}(s)=\Phi_\e(s,\theta)$, are $g_\e$-geodesics for all $\theta\in S^1$. Moreover, \eqref{EA.2} and \eqref{EA.6} imply that $\dot{c}_{\e,\theta}(0)= 
\cos\phi_\e(\theta)\partial_r|_{(R_0,\theta)}+\sin\phi_\e(\theta)\partial_\theta|_{(R_0,\theta)}$, while \eqref{EA.7} implies $\dot{c}_{\e,\theta}(l_\e(\theta))=\partial_r|_{(R_1,\theta)}$. Finally,
$(r\circ c_{\e,\theta})'>0$ follows from \eqref{EA.9}. This proves Proposition \ref{P2.2}(iv). Proposition \ref{P2.2}(v) is equivalent to \eqref{EA.11}.
\end{proof}

\bibliographystyle{acm}        
\bibliography{Riemannian_literature}

\begin{thebibliography}{10}

\bibitem{BanInv81}
{\sc Bangert, V.}
\newblock Geodesics and totally convex sets on surfaces.
\newblock {\em Invent. Math. 63}, 3 (1981), 507--517.

\bibitem{BanCom81}
{\sc Bangert, V.}
\newblock On the existence of escaping geodesics.
\newblock {\em Comment. Math. Helv. 56}, 1 (1981), 59--65.

\bibitem{Ban85}
{\sc Bangert, V.}
\newblock Geod\"{a}tische {L}inien auf {R}iemannschen {M}annigfaltigkeiten.
\newblock {\em Jahresber. Deutsch. Math.-Verein. 87}, 2 (1985), 39--66.

\bibitem{Cardelell19}
{\sc Carlotto, A., and De~Lellis, C.}
\newblock Min-max embedded geodesic lines in asymptotically conical surfaces.
\newblock {\em J. Differential Geom. 112}, 3 (2019), 411--445.

\bibitem{Co-Vo35}
{\sc Cohn-Vossen, S.}
\newblock K\"{u}rzeste {W}ege und {T}otalkr\"{u}mmung auf {F}l\"{a}chen.
\newblock {\em Compositio Math. 2\/} (1935), 69--133.

\bibitem{Co-Vo36}
{\sc Cohn-Vossen, S.}
\newblock Totalkr{\"u}mmung und geod{\"a}tische {Linien} auf
  einfachzusammenh{\"a}ngenden offenen vollst{\"a}ndigen
  {Fl{\"a}chenst{\"u}cken}.
\newblock {\em Mat. Sbornik (Recueil Math{\'e}matique) 1\/} (1936), 139--164.

\bibitem{GlSi78}
{\sc Gluck, H., and Singer, D.}
\newblock Scattering of geodesic fields. {I}.
\newblock {\em Ann. of Math. (2) 108}, 2 (1978), 347--372.

\bibitem{Hadamard1898}
{\sc Hadamard, J.}
\newblock Les surfaces \`{a} courbures oppos\'{e}es et leurs lignes
  g\'{e}od\'{e}siques.
\newblock {\em J. Math. Pures Appl. 4\/} (1898), 27--74.

\bibitem{ShShTa03}
{\sc Shiohama, K., Shioya, T., and Tanaka, M.}
\newblock {\em The geometry of total curvature on complete open surfaces},
  vol.~159 of {\em Cambridge Tracts in Mathematics}.
\newblock Cambridge University Press, Cambridge, 2003.

\bibitem{su21}
{\sc Suhr, S.}
\newblock Minimal sets of foliations in the plane (generalisation of
  {P}oincar\'e-{B}endixson),
  https://mathoverflow.net/questions/410307/minimal-sets-of-foliations-in-the-plane-generalisation-of-poincar\'e-bendixson,
  2021.

\bibitem{Mangoldt81}
{\sc von Mangoldt, H.}
\newblock Ueber diejenigen {P}unkte auf positiv gekr\"{u}mmten {F}l\"{a}chen,
  welche die {E}igenschaft haben, dass die von ihnen ausgehenden
  geod\"{a}tischen {L}inien nie aufh\"{o}ren, k\"{u}rzeste {L}inien zu sein.
\newblock {\em J. Reine Angew. Math. 91\/} (1881), 23--53.

\bibitem{Woj82}
{\sc Wojtkowski, M.}
\newblock Geodesics on open surfaces containing horns.
\newblock {\em Studia Math. 73}, 2 (1982), 115--151.

\end{thebibliography}

\end{document}